\newtheorem{theorem}{Theorem}[section]
\newtheorem{lemma}[theorem]{Lemma}
\newtheorem{proposition}[theorem]{Proposition}
\newtheorem{definition}[theorem]{Definition}
\newtheorem{claim}[theorem]{Claim}
\newtheorem{question}[theorem]{Question}
\newcommand{\ma}{\mathcal}
\newcommand{\bi}{\binom}
\DeclareMathOperator{\cod}{cod}
\DeclareMathOperator{\supp}{supp}
\DeclareMathOperator{\wt}{wt}
\title{Approximate generalized Steiner systems and near-optimal constant weight codes}
\date{}
\author{Miao Liu\thanks{M. Liu is with the Research Center for Mathematics and Interdisciplinary Sciences, Shandong University, Qingdao 266237, P.R. China (e-mail: liumiao10300403@163.com).} and Chong Shangguan\thanks{C. Shangguan is with the Research Center for Mathematics and Interdisciplinary Sciences, Shandong University, Qingdao 266237, China, and the Frontiers Science Center for Nonlinear Expectations, Ministry of Education, Qingdao 266237, China (e-mail: theoreming@163.com).}
}
\begin{document}

\maketitle

\begin{abstract}
\noindent  Constant weight codes (CWCs) and constant composition codes (CCCs) are two important classes of codes that have been studied extensively in both combinatorics and coding theory for nearly sixty years. In this paper we show that for {\it all} fixed odd distances, there exist near-optimal CWCs and CCCs asymptotically achieving the classic Johnson-type upper bounds.

Let $A_q(n,w,d)$ denote the maximum size of $q$-ary CWCs of length $n$ with constant weight $w$ and minimum distance $d$. One of our main results shows that for {\it all} fixed $q,w$ and odd $d$, one has $\lim_{n\rightarrow\infty}\frac{A_q(n,d,w)}{\binom{n}{t}}=\frac{(q-1)^t}{\binom{w}{t}}$, where $t=\frac{2w-d+1}{2}$. This implies the existence of near-optimal generalized Steiner systems originally introduced by Etzion, and can be viewed as a counterpart of a celebrated result of R\"odl on the existence of near-optimal Steiner systems. Note that prior to our work, very little is known about $A_q(n,w,d)$ for $q\ge 3$. A similar result is proved for the maximum size of CCCs.

We provide different proofs for our two main results, based on two strengthenings of the well-known Frankl-R\"odl-Pippenger theorem on the existence of near-optimal matchings in hypergraphs: the first proof follows by Kahn's linear programming variation of the above theorem, and the second follows by the recent independent work of Delcour-Postle, and Glock-Joos-Kim-K\"uhn-Lichev on the existence of near-optimal matchings avoiding certain forbidden configurations.

We also present several intriguing open questions for future research.
\vspace{10pt}

\noindent\textbf{Keywords:} constant weight codes, constant composition codes, Johnson bound, packings, matchinges in hypergraphs

\vspace{10pt}

\noindent\textbf{Mathematics Subject Classification:} 05B40; 05C65; 05D40; 94C30
\end{abstract}


\section{Introduction}\label{sec:cwc-ccc}

\subsection{Constant weight codes and constant composition codes}

\noindent Constant weight codes (CWCs) and constant composition codes (CCCs) are two important classes of codes with lots of applications, see e.g. \cite{Cos-For07,Che-Pur12,Chu-Duk06,Chu-Duk04}. They have been studied extensively in both combinatorics and coding theory for nearly sixty years, see e.g. \cite{Wood02-structure,Agr-Zeg00,Bro-War90,Gra-Slo80,Etzion-97,Che-Zha19,Chu-Duk06,Din-Yin05,Luo-Che03,Ding08}. Determining the maximum sizes of these two codes are the fundamental problems in this research area. Thanks to the well-known equivalence between binary CWCs and packings (see below for details), a vast number of results exist for binary CWCs. However, for $q>3$, relative little is known about $q$-ary CWCs and CCCs. In this paper, we take a step towards this direction by exploiting the deep connection between these two codes and matchinges in certain appropriately defined hypergraphs. We show that for a vast set of parameters (all fixed odd distances), there exist near-optimal CWCs and CCCs asymptotically achieving the classic Johnson-type upper bounds for these codes, up to a lower order term.

To proceed let us begin with some standard notations in coding theory. For positive integers $q$ and $n$, let $\Sigma_q:=\{0,1,\dots,q-1\}$ and $[n]:=\{1,2,\dots,n\}$. Let $\Sigma_q^n$ be the set of all vectors of length $n$ over the alphabet $\Sigma_q$. A $q$-ary code of length $n$ is a subset of $\Sigma_q^n$. A vector $\mathbf{x}\in\Sigma_q^n$ is denoted by $\mathbf{x}=(x_1,\ldots,x_n)$. The {\it weight} of $\mathbf{x}$, denoted by $\wt(\mathbf{x})$, is the number of non-zero coordinates of $\mathbf{x}$. Let $w,w_1,\ldots,w_{q-1}$ be non-negative integers. The {\it composition} of $\mathbf{x}$ is the vector\footnote{We will use a bar over a lowercase letter, e.g., $\overline{w}=(w_1,\dots,w_{q-1})$, to denote a composition of some vector, which particularly implies that $w,w_1,\ldots,w_{q-1}$ are all non-negative integers with $w=\sum_{j=1}^{q-1}w_j$.} $\overline{w}=(w_1,\dots,w_{q-1})\in [n]^{q-1}$, where for each $j\in[q-1]$, $w_j=|\{i\in [n]:x_i=j\}|$ is the number of coordinates of $\mathbf{x}$ that equal to $j$. For two vectors $\mathbf{x},\mathbf{y}\in \Sigma_q^n$, the {\it Hamming distance} $d(\mathbf{x},\mathbf{y}):=|\{i\in [n]:x_i\neq y_i\}|$ is the number of coordinates for which $\mathbf{x}$ and $\mathbf{y}$ differ. A code $C\subseteq\Sigma_q^n$ has {\it minimum distance} $d$ if $d(\mathbf{x},\mathbf{y})\ge d$ for all distinct $\mathbf{x},\mathbf{y}\in C$.

A code $C\subseteq\Sigma_q^n$ is said to be a {\it constant weight code} with constant weight $w$ if for every $\mathbf{x}\in C$, $\wt(\mathbf{x})=w$; similarly, it is is said to be a {\it constant composition code}\footnote{Clearly, such a code also has constant weight $w=\sum_{j=1}^{q-1}w_j$.} with constant composition $\overline{w}=(w_1,\dots,w_{q-1})$ if every $\mathbf{x}\in C$ has composition $\overline{w}$. For convenience, a code $C\subseteq\Sigma_q^n$ with minimum distance $d$ and constant weight $w$ (resp. constant composition $\overline{w}$) is denoted by an $(n,d,w)_q$-CWC (resp. $(n,d,\overline{w})_q$-CCC). Let $A_q(n,d,w)$ (resp. $A_q(n,d,\overline{w})$) denote the maximum cardinality among all $(n,d,w)_q$-CWCs (resp. $(n,d,\overline{w})_q$-CCCs).


Determining the exact or asymptotic values of $A_q(n,d,w)$ and $A_q(n,d,\overline{w})$ are central problems in the study of CCCs and CWCs. We will focus on the asymptotic behaviour of the above two functions when $q,d,w,\overline{w}$ are all {\it fixed} and $n$ tends to {\it infinity}. It is worth mentioning that this set of parameters are of particular importance as they have deep connections with many well-studied objects in extremal and probabilistic combinatorics, as well as combinatorial design theory. In fact, the equivalence between packings and binary CWCs completely determines the asymptotic values of $A_q(n,d,w)$ for $q=2$. Moreover, our work is essentially based on some powerful developments in extremal and probabilistic combinatorics, which consider the sufficient conditions on which a given hypergraph has a near-perfect matching avoiding certain forbidden configurations. We will discuss in detail the connections between these two codes and various combinatorial objects below.

\subsection{Related combinatorial structures}\label{sec:related_structure}

\noindent We briefly mention some combinatorial structures that are closely related to CWCs or CCCs.

\noindent \paragraph{Packings, Steiner systems, and binary CWCs.} Let $\binom{[n]}{w}=\{A\subseteq [n]:|A|=w\}$ be the family of all $w$-subsets of $[n]$. A family of $w$-subsets $\mathcal{P}\subseteq\binom{[n]}{w}$ is called an {\it $(n,w,t)$-packing} if for every distinct $A,B\in\mathcal{P}$, $|A\cap B|<t$. The packing function $P(n,w,t)$ is the maximum possible cardinality of an $(n,w,t)$-packing. By a double counting argument it is not hard to check that $P(n,w,t)\le \binom{n}{t}/\binom{w}{t}$. An $(n,w,t)$-packing $\mathcal{P}$ is called {\it asymptotically good} if $|\mathcal{P}|\ge(1-o(1))\cdot\binom{n}{t}/\binom{w}{t}$, where $o(1)\rightarrow 0$ as $n\rightarrow\infty$; moreover, it is called an {\it $(n,w,t)$-Steiner system} if $|\mathcal{P}|=\binom{n}{t}/\binom{w}{t}$. In 1963, Erd\H{o}s and Hanani \cite{Erd-Han63} conjectured that for all fixed $2\le t<w$,
\begin{align*}
\lim_{n\rightarrow\infty}\frac{ P(n,w,t)}{\binom{n}{t}/\binom{w}{t}}=1,
\end{align*}
i.e., there always exist asymptotically good $(n,w,t)$-packings. In 1985, R\"{o}dl \cite{Rod85} introduced the celebrated nibble method and resolved the Erd\H{o}s-Hanani conjecture. Strengthening R\"{o}dl's result, in a breakthrough in 2014 Keevash \cite{Kee14} showed that one can always have $P(n,w,t)=\binom{n}{t}/\binom{w}{t}$, i.e., $(n,w,t)$-Steiner systems always exist, provided that $n$ is sufficiently large and satisfies certain necessary divisibility condition. Keevash's result resolved one of the oldest problems in combinatorics, originally asked by Steiner from 1853. The papers \cite{Rod85,Kee14} have been very influential, and motivated a lot of research in extremal and probabilistic combinatorics, combinatorial design theory, and related fields, see the surveys \cite{Glock21suervey,Kang23survey,Keevash18survey} and the references therein.

Packings and binary CWCs are in fact equivalent. To see this, for every $A\in\binom{[n]}{w}$ let $\sigma(A)=(a_1,\ldots,a_n)\in\{0,1\}^n$ denote the {\it characteristic vector} of $A$ which is the binary vector defined as for each $i\in[n]$, $a_i=1$ if and only if $i\in A$. Clearly, $\sigma$ defines an one-to-one correspondence between $\binom{[n]}{w}$ and binary vectors of length $n$ with weight $w$. It is straightforward to check that for $A,B\in\binom{[n]}{w}$,
\begin{align*}
    2|A\cap B|+d(\sigma(A),\sigma(B))=2w.
\end{align*}
Therefore, $\mathcal{P}\subseteq\binom{[n]}{w}$ is an $(n,w,t)$-packing if and only if $\sigma(\mathcal{P}):=\{\sigma(A):A\in\mathcal{P}\}\subseteq\{0,1\}^n$ is an $(n,2w-2t+2,w)_2$-CWC.

The equivalence between packings and binary CWCs therefore implies that $A_2(n,d,w)=P(n,w,\frac{2w-d+2}{2})$.
Hence, the aforementioned results of R\"{o}dl and Keevash also gave fairly satisfactory resolution to the problem of determining $A_2(n,d,w)$ for all fixed $w,d$ as $n\rightarrow\infty$.

\paragraph{Generalized Steiner systems and $q$-ary CWCs.} The equivalence between packings and binary CWCs leads to a satisfactory solution to the problem of estimating $A_2(n,d,w)$ in the aforementioned parameter regime. It is therefore natural to ask what happens for $q\ge 3$. In particular, whether or not there exists a ``free'' lower bound for $A_q(n,d,w)$ when $q\ge 3$. Indeed, for $q\ge 3$ Etzion \cite{Etz18} defined {\it generalized Steiner systems} as $(n,d,w)$-CWCs with odd $d$ that achieve the upper bound in \cref{lem Ucwc} with equality. However, there was no general existential result that is in parallel with R\"{o}dl's asymptotic lower bound or Keevash's exact result. The interested reader is referred to a book of Etzion \cite{Etz22} for background and many open problems on generalized Steiner systems. \cref{thm:cwc} in fact implies the existence of near-optimal generalized Steiner systems, and can be viewed as a counterpart of R\"odl's result \cite{Rod85} in a more general setting.

Lastly, Chee, Gao, Kiah, Ling, Zhang and Zhang \cite{Che-Zha19} used decompositions of edge-colored digraphs as a new technique to obtain asymptotically exact results for $A_q(n,d,w)$ and $A_q(n,d,\overline{w})$. However they only considered $d\in\{2w-2,2w-3\}$.

\paragraph{Matchinges in hypergraphs.} R\"odl's result \cite{Rod85} can be reformulated as a special case of the more general results of Frankl and R\"odl \cite{Fra-Rod85} and Pippenger (see \cite{Pip-Spe89}) on the the existence of near-optimal perfect matchinges in hypergraphs. In the literature, their important results were often termed as the Frankl-R\"odl-Pippenger theorem, see e.g. \cite{Kah96}. Our proofs of Theorems \ref{thm:cwc} and \ref{thm:ccc} are essentially based on two powerful strengthenings of the Frankl-R\"odl-Pippenger theorem, one of which is Kahn's linear programming variation of the above theorem \cite{Kah96}, and the other is the very recent independent work of Delcour and Postle \cite{Del-Pos22}, and Glock, Joos, Kim, K\"uhn, and Lichev \cite{Glo-Lic23} on the existence of near-optimal matchings avoiding certain forbidden configurations. We will discuss the connection between codes and matchinges in hypergraphs, as well as the results of \cite{Kah96,Del-Pos22,Glo-Lic23} in \cref{sec:matchinges} below.

\subsection{Upper and lower bounds on $A_q(n,d,w)$ and $A_q(n,d,\overline{w})$}\label{subsec:bounds}

\noindent In the literature, there are a lot of papers devoting to the determination of the exact or asymptotic values of $A_q(n,d,w)$ and $A_q(n,d,\overline{w})$ for the above set of parameters. Below we will briefly review some previously known upper and lower bounds on $A_q(n,d,w)$ and $A_q(n,d,\overline{w})$, and then introduce our main results.

In the remaining part of this paper, we always assume that $q,d,w,\overline{w}$ are fixed and $n\rightarrow\infty$. The Johnson-type upper bounds for $A_q(n,d,w)$ and $A_q(n,d,\overline{w})$, which followed from a classic argument of Johnson \cite{Joh62}, are presented as follows.

\begin{lemma}[Johnson-type bounds for CWCs and CCCs, \cite{Ost-Sva02,Sva-Gal02}]\label{lem:recursive}
    \begin{align*}
        A_q(n,d,w)&\le \left\lfloor \frac{(q-1)n}{w}A_q(n-1,d,w-1) \right\rfloor,\\
         A_q(n,d,\overline{w})&\le \left\lfloor \frac{n}{w_1}A_q(n-1,d,(w_1-1,w_2,\dots,w_{q-1}))\right\rfloor.
    \end{align*}
\end{lemma}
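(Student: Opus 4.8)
The plan is to prove both inequalities by the standard double-counting argument underlying Johnson's original bound \cite{Joh62}; I will describe the first in detail, as the second is entirely analogous. Throughout, the one fact to keep in mind is that deleting a coordinate on which all codewords of a subcode agree reduces each weight in a controlled way while leaving all pairwise Hamming distances unchanged.

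First I would fix an optimal $(n,d,w)_q$-CWC $C\subseteq\Sigma_q^n$, so $|C|=A_q(n,d,w)$, and count the set of incidences
\[
\mathcal{I}=\{(\mathbf{x},i,a):\mathbf{x}\in C,\ i\in[n],\ a\in\Sigma_q\setminus\{0\},\ x_i=a\}
\]
in two ways. Summing first over $\mathbf{x}$: since every codeword has weight exactly $w$, each $\mathbf{x}\in C$ contributes exactly $w$ triples, so $|\mathcal{I}|=w|C|$. Summing first over the pair $(i,a)$: for each of the $(q-1)n$ choices of $(i,a)$, put $C_{i,a}=\{\mathbf{x}\in C:x_i=a\}$, and let $C_{i,a}'$ be obtained from $C_{i,a}$ by deleting the $i$-th coordinate of every member. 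The key step is to check that $C_{i,a}'$ is an $(n-1,d,w-1)_q$-CWC: the deleted coordinate equals the nonzero symbol $a$ in every codeword of $C_{i,a}$, so each weight drops by exactly one, and because all these codewords agree on coordinate $i$, the Hamming distance between any two of them is preserved; hence the minimum distance stays $\ge d$, and in particular the deletion map is injective on $C_{i,a}$. Therefore $|C_{i,a}|=|C_{i,a}'|\le A_q(n-1,d,w-1)$, and combining the two counts gives
\[
w|C|=|\mathcal{I}|=\sum_{(i,a)}|C_{i,a}|\le (q-1)n\cdot A_q(n-1,d,w-1).
\]
Rearranging and using that $|C|$ is a nonnegative integer yields $A_q(n,d,w)\le\big\lfloor\tfrac{(q-1)n}{w}A_q(n-1,d,w-1)\big\rfloor$.

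For the second inequality I would run the same scheme with an optimal $(n,d,\overline{w})_q$-CCC $C$, but count only incidences $(\mathbf{x},i)$ with $x_i=1$: since the composition of every codeword is $\overline{w}=(w_1,\dots,w_{q-1})$, each codeword contributes exactly $w_1$ such pairs, there are $n$ choices of $i$, and deleting coordinate $i$ from $\{\mathbf{x}\in C:x_i=1\}$ produces an $(n-1,d,(w_1-1,w_2,\dots,w_{q-1}))_q$-CCC (again the composition changes only in the first entry and distances are preserved). This gives $w_1|C|\le n\cdot A_q(n-1,d,(w_1-1,w_2,\dots,w_{q-1}))$ and hence the stated floor bound. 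Since both arguments are elementary counting, I do not anticipate a genuine obstacle; the only point requiring a line of care is the distance-preservation claim for coordinate deletion, which holds precisely because the deleted coordinate is constant on the relevant subcode.
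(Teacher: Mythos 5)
Your proof is correct: it is the standard Johnson-type double-counting argument, fixing a nonzero symbol in a coordinate, shortening at that coordinate, observing that weight (or the relevant composition entry) drops by one while pairwise distances are preserved, and summing over all coordinate--symbol pairs. The paper does not give its own proof of \cref{lem:recursive}, deferring instead to \cite{Ost-Sva02,Sva-Gal02}, and your argument is precisely the classical one underlying those references, so the approaches coincide.
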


Applying \cref{lem:recursive} recursively it is not hard to deduce the following upper bound for CWCs.

\begin{theorem}[Upper bound for CWCs, see e.g. \cite{Che-Lin07}]\label{lem Ucwc}
    Let $t=\lceil \frac{2w-d+1}{2} \rceil$. Then
    \begin{equation*}
        A_q(n,d,w)\le \left\{ \begin{array}{cl}
                   \frac{(q-1)^t \binom{n}{t}}{\binom{w}{t}},& \text{if $d$ is odd;} \\
                    \frac{(q-1)^{t-1} \binom{n}{t}}{\binom{w}{t}}, & \text{if $d$ is even}.
                \end{array}\right.
    \end{equation*}
\end{theorem}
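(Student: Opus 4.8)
The plan is to apply the recursion in \cref{lem:recursive} repeatedly, peeling off one coordinate per step, until the residual constant weight becomes so small relative to $d$ that the minimum-distance constraint by itself caps the code size. Write $t=\lceil\frac{2w-d+1}{2}\rceil$, and note that $d$ odd forces $2t=2w-d+1$, i.e.\ $d=2w-2t+1$, while $d$ even forces $2t=2w-d+2$, i.e.\ $d=2(w-t+1)$. Since dropping the floor in \cref{lem:recursive} only weakens each inequality, iterating its first bound $k$ times and telescoping the resulting product of ratios (which collapses to $\binom{n}{k}/\binom{w}{k}$ once the common factor $k!$ is cancelled) yields
\[
A_q(n,d,w)\;\le\;(q-1)^k\,\frac{n(n-1)\cdots(n-k+1)}{w(w-1)\cdots(w-k+1)}\,A_q(n-k,d,w-k)\;=\;(q-1)^k\,\frac{\binom{n}{k}}{\binom{w}{k}}\,A_q(n-k,d,w-k).
\]

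For odd $d$ I would take $k=t$. The residual code has constant weight $w-t$, and any two weight-$(w-t)$ vectors differ in at most $2(w-t)$ coordinates, whereas here $d=2(w-t)+1>2(w-t)$; hence $A_q(n-t,d,w-t)\le 1$ and therefore $A_q(n,d,w)\le(q-1)^t\binom{n}{t}/\binom{w}{t}$, which is the claimed bound (note $t\le w$, so $\binom{w}{t}$ is a positive integer).

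For even $d$, stopping one step earlier is essential: the choice $k=t$ would again leave a residual code of size at most $1$, but only deliver the bound with $(q-1)^t$ in place of $(q-1)^{t-1}$. So I would instead take $k=t-1$, leaving a code of weight $w':=w-t+1$ with minimum distance exactly $2w'$. The key observation is that two weight-$w'$ vectors $\mathbf{x},\mathbf{y}$ at distance at least $2w'$ must have disjoint supports, since $d(\mathbf{x},\mathbf{y})\le|\supp(\mathbf{x})\cup\supp(\mathbf{y})|=2w'-|\supp(\mathbf{x})\cap\supp(\mathbf{y})|$; consequently the supports of such a code are pairwise disjoint $w'$-subsets of the ground set, giving $A_q(n-t+1,d,w')\le\lfloor(n-t+1)/w'\rfloor\le(n-t+1)/(w-t+1)$. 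Substituting this and using the elementary identity $\binom{m}{r-1}(m-r+1)=r\binom{m}{r}$ with $(m,r)=(n,t)$ and $(m,r)=(w,t)$ rewrites $\frac{\binom{n}{t-1}}{\binom{w}{t-1}}\cdot\frac{n-t+1}{w-t+1}$ as $\frac{\binom{n}{t}}{\binom{w}{t}}$, yielding $A_q(n,d,w)\le(q-1)^{t-1}\binom{n}{t}/\binom{w}{t}$.

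The only genuine obstacle is the even case: the bare recursion overshoots the exponent of $q-1$ by one, and this is repaired by terminating the iteration at weight $w-t+1$ and invoking the disjoint-support argument rather than one further application of \cref{lem:recursive}; everything else is bookkeeping with binomial coefficients. One should also briefly check the degenerate ranges — for instance $t=1$ in the even case (where the recursion is not used at all and the disjoint-support bound gives $A_q(n,2w,w)\le\lfloor n/w\rfloor$), or $w-t=0$ when $d=1$ — to confirm that the intermediate weights stay at least $1$ so that \cref{lem:recursive} applies, and that the terminal bounds remain valid.
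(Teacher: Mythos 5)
Your proposal is correct and takes essentially the same route as the paper's proof: iterate the Johnson recursion of \cref{lem:recursive} to telescope the binomial ratios, running $t$ steps in the odd case (where the residual code is forced to be a singleton because two weight-$(w-t)$ vectors can differ in at most $2(w-t)<d$ positions) and $t-1$ steps in the even case (where the residual weight-$(w-t+1)$ code at distance $2(w-t+1)$ must have pairwise disjoint supports). Your justification of the two terminal bounds via the union-of-supports inequality, and the identity $\binom{m}{r-1}(m-r+1)=r\binom{m}{r}$ to clean up the even case, are exactly the bookkeeping the paper carries out; your remark that one must stop one step early in the even case to avoid overshooting the power of $q-1$ is the same observation, just made explicit.
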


Some more notations are needed before presenting the upper bound for CCCs. Given two positive integers $w,t$ and a vector $\overline{w}=(w_1,\dots,w_{q-1})$. We say that a vector $\overline{t}=(t_1,\dots,t_{q-1})$ is {\it $(\overline{w},t)$-admissible} if $\sum_{i=1}^{q-1} t_i=t$ and $t_i\le w_i$ for all $1\le i\le q-1$. Let
\begin{equation}\label{eq f}
     f(\overline{w},t)=\max\limits_{\overline{t} \text{ is }(\overline{w},t) \text{-admissible}}\binom{t}{t_1,\dots ,t_{q-1}}=\max\limits_{\overline{t} \text{ is }(\overline{w},t) \text{-admissible}}~\frac{t!}{t_1!\cdots t_{q-1}!}.
\end{equation}


\begin{theorem}[Upper bound for CCCs, see Lemma 5 in \cite{Luo-Che03}] \label{lem Uccc} 
    Let $w_0=n-w$ and $t=\lceil \frac{2w-d+1}{2} \rceil$. Then we have
    \begin{equation*}
        A_q(n,d,\overline{w})\le \left\{ \begin{array}{cl}
                   \frac{\binom{n}{n-w,w_1,\dots, w_{q-1}}}{\binom{n-t}{w-t}f(\overline{w},w-t)},& \text{if $d$ is odd}; \\
                   \frac{\binom{n}{n-w,w_1,\dots, w_{q-1}}}{\binom{n-t}{w-t}f(\overline{w},w-t+1)}, & \text{if $d$ is even}.
                \end{array}\right.
    \end{equation*}
\end{theorem}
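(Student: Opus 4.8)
The plan is to obtain both inequalities by iterating the Johnson‑type recursion for CCCs in \cref{lem:recursive} — ``peeling off'' one symbol occurrence at a time — until the residual code is controlled by a trivial bound. First I would record a symmetrized form of the recursion: since permuting the entries of $\overline{w}$ merely relabels the nonzero symbols of $\Sigma_q$, the quantity $A_q(n,d,\overline{w})$ is invariant under such permutations, so \cref{lem:recursive} gives, for every $j\in[q-1]$ with $w_j\ge 1$,
\[
A_q(m,d,\overline{u})\le \tfrac{m}{u_j}\,A_q\bigl(m-1,d,\overline{u}-\mathbf{e}_j\bigr),
\]
where $\mathbf{e}_j$ is the $j$-th unit vector and the floor has been dropped (this only weakens the bound). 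Given a $(\overline{w},T)$-admissible vector $\overline{t}=(t_1,\dots,t_{q-1})$ (so $t_j\le w_j$ and $\sum_j t_j=T$), I would apply this $T$ times, decrementing coordinate $j$ exactly $t_j$ times in total; each step lowers the length by one and one composition entry by one, so the interleaving of the peels is irrelevant and one gets
\[
A_q(n,d,\overline{w})\le \frac{n!/(n-T)!}{\prod_{j=1}^{q-1} w_j!/(w_j-t_j)!}\cdot A_q\bigl(n-T,d,\overline{w}-\overline{t}\bigr),
\]
with an empty product of falling factorials read as $1$.

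Next I would pin down the base case for the two parities. If $d$ is odd, then $t=\tfrac{2w-d+1}{2}$ is an integer; taking $T=t$ and any $(\overline{w},t)$-admissible $\overline{t}$, the residual composition has weight $w-t=\tfrac{d-1}{2}$, and any two distinct words of this weight are at Hamming distance at most $2(w-t)=d-1<d$, so $A_q(n-t,d,\overline{w}-\overline{t})\le 1$. If $d$ is even, then $t=w-\tfrac d2+1$; taking $T=t-1=w-\tfrac d2$ and any $(\overline{w},t-1)$-admissible $\overline{t}$, the residual weight is $w-T=\tfrac d2$, and since distinct words of weight $\tfrac d2$ are at distance at most $d$, the distance‑$\ge d$ requirement forces their supports to be pairwise disjoint $\tfrac d2$-subsets of $[n-T]$; hence $A_q(n-T,d,\overline{w}-\overline{t})\le \lfloor (n-T)/(d/2)\rfloor\le \tfrac{n-t+1}{w-t+1}$. (The degenerate ranges $d>2w$ or $n$ very small make the relevant bounds vacuous or $\le 1$, so they cause no trouble.)

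Substituting these base cases into the iterated inequality and then minimizing over all admissible $\overline{t}$ — i.e.\ over all peeling strategies — would give
\[
A_q(n,d,\overline{w})\le \min_{\overline{t}}\ \frac{n!/(n-t)!}{\prod_{j} w_j!/(w_j-t_j)!}\qquad(d\text{ odd}),
\]
and the same expression with an extra factor $w-t+1$ in the denominator and $\sum_j t_j=t-1$ when $d$ is even. It then remains only to rewrite these minima in the advertised closed form. The key observation is the bijection $\overline{t}\leftrightarrow\overline{s}:=\overline{w}-\overline{t}$ between $(\overline{w},t)$-admissible vectors and $(\overline{w},w-t)$-admissible vectors, under which $\prod_j(w_j-t_j)!=\prod_j s_j!$; together with the definition \eqref{eq f} of $f$ this yields $\min_{\overline{t}}\prod_j(w_j-t_j)!=(w-t)!/f(\overline{w},w-t)$, and plugging in $\binom{n}{n-w,w_1,\dots,w_{q-1}}=\tfrac{n!}{(n-w)!\prod_j w_j!}$ and $\binom{n-t}{w-t}=\tfrac{(n-t)!}{(w-t)!\,(n-w)!}$ collapses the displayed minimum to exactly $\binom{n}{n-w,w_1,\dots,w_{q-1}}\big/\bigl(\binom{n-t}{w-t}f(\overline{w},w-t)\bigr)$; the even case is identical with $w-t+1$ in place of $w-t$.

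The whole argument is driven by \cref{lem:recursive} and the two elementary base cases, so there is no deep obstacle. I expect the only genuinely fiddly step to be the last one — the bookkeeping of multinomial coefficients, factorials and falling factorials needed to check that the optimized iterated bound matches the two stated expressions — together with a routine verification that an admissible $\overline{t}$ exists in the relevant range of parameters and that the minimum over peeling strategies is what makes $f$ appear as a \emph{maximum} in the denominator.
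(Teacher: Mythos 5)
Your argument is correct and follows essentially the same route as the paper's own proof: both iterate the Johnson-type recursion of \cref{lem:recursive}, peel $t$ (resp.\ $t-1$) symbol occurrences, bound the residual code by $1$ (resp.\ $\lfloor (n-t+1)/(w-t+1)\rfloor$), and then recognize the maximal multinomial $f(\overline w,\cdot)$ in the closed form. The one cosmetic difference is that the paper fixes a greedy peeling order (always decrement the currently largest coordinate) and asserts that this realizes $f$, whereas you more cleanly observe that the iterated inequality holds for \emph{every} $(\overline{w},T)$-admissible $\overline t$ and then minimize over all of them, with the bijection $\overline t \leftrightarrow \overline w - \overline t$ turning the minimum of $\prod_j(w_j-t_j)!$ into $(w-t)!/f(\overline w,w-t)$; this sidesteps the (small but unstated) verification that the greedy choice is optimal.
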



Although there have been a large number of works devoted to determining the exact or asymptotic values for $A_q(n,d,w)$ and $A_q(n,d,\overline{w})$, most of them have rather strict restrictions on the set of parameters, e.g., $q,d,w,\overline{w}$ are some small and sporadic constants, see \cite{Lia-Ji19,Wei-Ge15,Che-Lin08,Ding08,Che-Lin07,Pol19,Wu-Fan09,Zha-Ge10,Zha-Ge13,Ge08,Wan21,Zha-Ge12}; for all fixed $w$ and $\overline{w}$ but $d\in\{2w-1,2w-2,2w-3\}$, see \cite{Che-Lin10,Che-Zha18,Che-Zha19}. In fact, as far as we can tell, in all of the previously known results the minimum distance $d$ was restricted to the above range.

We continue this line of research and determine the asymptotic values for $A_q(n,d,w)$ and $A_q(n,d,\overline{w})$ for a vast set of parameters. Our first result shows that for fixed odd distances, there exist CWCs asymptotically attaining the upper bound given by \cref{lem Ucwc}.

\begin{theorem}\label{thm:cwc}
Let $q$, $w$ and $d$ be fixed positive integers, where $d$ is odd. Let $t= \frac{2w-d+1}{2} $. Then for sufficiently large integer $n$, we have that
\begin{align*}
    A_q(n,d,w)\ge (1-o(1))\frac{(q-1)^t\binom{n}{t}}{\binom{w}{t}},
\end{align*}
where $o(1)\rightarrow 0$ as $n\rightarrow \infty$.
\end{theorem}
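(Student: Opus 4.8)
The plan is to realize near-optimal $(n,d,w)_q$-CWCs as near-perfect matchings in an auxiliary hypergraph and then invoke the Frankl--R\"odl--Pippenger theorem (in Kahn's linear-programming form). First I would set up the right hypergraph $\mathcal{H}$. Since $d=2w-2t+1$ is odd, two weight-$w$ vectors $\mathbf{x},\mathbf{y}\in\Sigma_q^n$ are at distance $\ge d$ precisely when they \emph{disagree as vectors on every $t$-subset of coordinates} in a suitable sense; concretely, the natural ``$t$-shadow'' of a weight-$w$ vector $\mathbf{x}$ should be the set of pairs $\{(i,x_i):i\in S\}$ over all $t$-subsets $S\subseteq\supp(\mathbf{x})$, each such pair being a ``position-symbol'' token drawn from a ground set of size $(q-1)\binom{n}{t}$-ish (more precisely, the ground set is the set of all ``partial weight-$t$ patterns'', of which there are $(q-1)^t\binom{n}{t}$). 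Each weight-$w$ vector then corresponds to a $\binom{w}{t}$-element subset of this ground set (its collection of weight-$t$ sub-patterns), and the odd-distance condition $d(\mathbf{x},\mathbf{y})\ge d$ is equivalent to these two $\binom{w}{t}$-sets being disjoint — this is the place where oddness is essential, and I would verify this equivalence carefully via the identity $d(\mathbf{x},\mathbf{y})=2w-|\{i:x_i=y_i\neq 0\}|-(\text{something})$, mirroring the binary identity $2|A\cap B|+d(\sigma(A),\sigma(B))=2w$ given in the excerpt. So a CWC with minimum distance $d$ is exactly a matching in the $\binom{w}{t}$-uniform hypergraph $\mathcal{H}$ whose vertex set is the $(q-1)^t\binom{n}{t}$ weight-$t$ patterns and whose edges are the vector-shadows of weight-$w$ vectors.

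Second, I would check the near-regularity and small-codegree hypotheses needed to apply the Frankl--R\"odl--Pippenger / Kahn theorem to $\mathcal{H}$. The number of edges through a fixed vertex (weight-$t$ pattern $P$) is the number of weight-$w$ vectors containing $P$ as a sub-pattern: choose the remaining $w-t$ support coordinates among the $n-t$ unused positions and assign each a nonzero symbol, giving $(q-1)^{w-t}\binom{n-t}{w-t}$, which is the same for every vertex, so $\mathcal{H}$ is exactly $D$-regular with $D=(q-1)^{w-t}\binom{n-t}{w-t}=\Theta(n^{w-t})$. For the codegree bound, two distinct weight-$t$ patterns lie in at most $\binom{n-t}{w-t}\cdot(q-1)^{w-t}$-order-smaller many common edges — one must split into cases according to how much the supports of the two patterns overlap, but in every case the count is $O(n^{w-t-1})=o(D)$, which is exactly the sub-$D$ codegree condition. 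Then the theorem yields a matching covering all but an $o(1)$-fraction of the $(q-1)^t\binom{n}{t}$ vertices, hence a matching of size $(1-o(1))(q-1)^t\binom{n}{t}/\binom{w}{t}$, which is precisely the claimed bound.

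I expect the main obstacle to be step one: pinning down the correct ground set and shadow map so that the \emph{disjointness} of shadows is \emph{equivalent} to (not merely implied by) the minimum-distance condition $d(\mathbf{x},\mathbf{y})\ge 2w-2t+1$. For even $d$ this equivalence fails — one would get an extra factor of $q-1$ discrepancy between the lower and upper bounds — and the whole strategy depends on the odd case admitting a clean combinatorial reformulation. A secondary technical point is the codegree case analysis: when the two $t$-patterns overlap in $j<t$ positions with \emph{consistent} symbols, the common-edge count is $(q-1)^{w-2t+j}\binom{n-2t+j}{w-2t+j}$ (and it is $0$ when they are inconsistent or overlap in a position with different symbols), and one must confirm this is $o(D)$ uniformly in $j$; since $w-2t+j\le w-t-1$ for $j\le t-1$, this holds, but it should be spelled out. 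Once these two combinatorial facts are in place, the rest is a direct citation of the Frankl--R\"odl--Pippenger theorem as strengthened by Kahn.
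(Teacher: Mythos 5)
The high-level strategy is the right one and matches the paper: encode codewords as edges of an auxiliary shadow hypergraph and invoke Kahn's theorem. But the specific hypergraph you propose does not correctly encode the minimum-distance condition, and the gap is fatal precisely for $q\ge 3$, which is the whole point of the theorem.

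You take the ground set to be all weight-$t$ partial patterns (a $t$-set of position--symbol pairs) and map each weight-$w$ vector $\mathbf{x}$ to the $\binom{w}{t}$-element family $\binom{\pi(\mathbf{x})}{t}$ of its weight-$t$ sub-patterns. Two such families are disjoint if and only if $|\pi(\mathbf{x})\cap\pi(\mathbf{y})|\le t-1$. However, the distance identity (equation \eqref{eq:distance} in the paper) reads
\begin{align*}
d(\mathbf{x},\mathbf{y}) \;=\; 2w-|\pi(\mathbf{x})\cap\pi(\mathbf{y})|-|\supp(\mathbf{x})\cap\supp(\mathbf{y})|,
\end{align*}
and the second term $|\supp(\mathbf{x})\cap\supp(\mathbf{y})|$ is \emph{not} controlled by the first once $q\ge 3$: take $\mathbf{x}$ and $\mathbf{y}$ with identical supports but with different nonzero symbols on every coordinate of the support (e.g.\ $\mathbf{x}$ all $1$'s and $\mathbf{y}$ all $2$'s on the same $w$ positions). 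Then $\pi(\mathbf{x})\cap\pi(\mathbf{y})=\emptyset$, so your shadow sets are disjoint, yet $d(\mathbf{x},\mathbf{y})=w$, which is strictly below $d=2w-2t+1$ whenever $w\ge 2t$ (and for $w<2t$ one can take $p=t-1$ agreements and $s=t+1$ common support coordinates to get the same failure). In other words, the implication you actually need — disjoint edges $\Rightarrow$ minimum distance $\ge d$ — is false; only the reverse implication holds (distance $\ge d$ does force $2|\pi(\mathbf{x})\cap\pi(\mathbf{y})|\le 2t-1$, hence $|\pi(\mathbf{x})\cap\pi(\mathbf{y})|\le t-1$). So a large matching in your hypergraph is not a CWC, and your appeal to Kahn's theorem does not produce a code. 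This is not the place where oddness is the obstruction; your reformulation is wrong for odd $d$ too.

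The fix — which is what the paper does — is to enlarge the ground set to include the bare coordinate positions $[n]$ alongside the position--symbol pairs, send $\mathbf{x}$ to the $2w$-element set $\pi(\mathbf{x})\cup\supp(\mathbf{x})$, and take $2t$-shadows: $E(\mathcal{H})=\{\binom{\pi(\mathbf{x})\cup\supp(\mathbf{x})}{2t}:\mathbf{x}\in J_q(n,w)\}$. Now disjointness of two edges is exactly $|\pi(\mathbf{x})\cap\pi(\mathbf{y})|+|\supp(\mathbf{x})\cap\supp(\mathbf{y})|\le 2t-1$, which is exactly $d(\mathbf{x},\mathbf{y})\ge d$. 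One consequence you would then have to absorb is that this $\mathcal{H}$ is \emph{not} regular: vertices $u$ whose symbol part and whose bare-coordinate part have identical support have degree $\binom{n-t}{w-t}(q-1)^{w-t}$, while all other vertices have degree $O(n^{w-t-1})$. So one cannot invoke the near-regularity form of Frankl--R\"odl--Pippenger directly; the paper instead uses Kahn's LP form via the constant fractional matching $f\equiv 1/\Delta(\mathcal{H})$ and computes $\nu(\mathcal{H})\ge(1-o(1))|E(\mathcal{H})|/\Delta(\mathcal{H})=(1-o(1))(q-1)^t\binom{n}{t}/\binom{w}{t}$. The codegree check similarly needs a case split on whether the symbol and bare-coordinate parts of the two vertices have matching supports, not just on how much the patterns overlap.
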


Combining Theorems \ref{lem Ucwc} and \ref{thm:cwc} we have that

\begin{proposition}
    Under the assumption of \cref{thm:cwc},
    \begin{align*}
        \lim_{n\rightarrow\infty}\frac{A_q(n,d,w)}{\binom{n}{t}}=\frac{(q-1)^t}{\binom{w}{t}}.
    \end{align*}
\end{proposition}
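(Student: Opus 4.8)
The plan is to obtain the proposition as an immediate squeeze between the upper bound of \cref{lem Ucwc} and the lower bound of \cref{thm:cwc}; the entire mathematical content sits in \cref{thm:cwc}, so what remains is essentially bookkeeping. The one preliminary point to verify is that the parameter $t$ is the same in both results: under the standing assumption that $d$ is odd, $2w-d+1$ is even, so $\lceil\frac{2w-d+1}{2}\rceil=\frac{2w-d+1}{2}=t$. Thus the $t$ in \cref{lem Ucwc} agrees with the $t$ in \cref{thm:cwc} and in the statement of the proposition, and the two estimates may be placed side by side.

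Next, I would divide both inequalities by $\binom{n}{t}$. The odd-distance case of \cref{lem Ucwc}, after discarding the (only weakening) floor, gives $\frac{A_q(n,d,w)}{\binom{n}{t}}\le\frac{(q-1)^t}{\binom{w}{t}}$, while \cref{thm:cwc} gives $\frac{A_q(n,d,w)}{\binom{n}{t}}\ge(1-o(1))\frac{(q-1)^t}{\binom{w}{t}}$. Combining,
\begin{align*}
(1-o(1))\,\frac{(q-1)^t}{\binom{w}{t}}\;\le\;\frac{A_q(n,d,w)}{\binom{n}{t}}\;\le\;\frac{(q-1)^t}{\binom{w}{t}},
\end{align*}
where $q$, $w$, $d$, and hence $t$ are fixed, so $\frac{(q-1)^t}{\binom{w}{t}}$ is a constant independent of $n$ and $o(1)\to 0$ as $n\to\infty$.

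Finally, letting $n\to\infty$ and invoking the squeeze theorem yields $\lim_{n\to\infty}\frac{A_q(n,d,w)}{\binom{n}{t}}=\frac{(q-1)^t}{\binom{w}{t}}$, which is the claim. There is no genuine obstacle in this step: the proposition is a one-line corollary of the two displayed bounds, and the real work is entirely contained in the proof of \cref{thm:cwc} carried out elsewhere in the paper.
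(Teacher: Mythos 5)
Your proof is correct and matches the paper exactly: the proposition is stated there as an immediate consequence of combining the upper bound of \cref{lem Ucwc} with the lower bound of \cref{thm:cwc}, precisely the squeeze you carry out. Your preliminary check that $\lceil\tfrac{2w-d+1}{2}\rceil=\tfrac{2w-d+1}{2}$ when $d$ is odd is the right bookkeeping detail to verify before identifying the two $t$'s.
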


\noindent Note that the above result also implies the existence of approximate generalized Steiner systems, and solves an open problem of Etzion in an asymptotic form (see Problem 3.6 in \cite{Etz22} for more details).

Our second result shows that for fixed odd distances, there also exist CCCs asymptotically attaining the upper bound given by \cref{lem Uccc}.

\begin{theorem}\label{thm:ccc}
Let $q$, $w$ and $d$ be fixed positive integers, where $d$ is odd. Let $t=\frac{2w-d+1}{2}$. Then for given composition $\overline{w}=(w_1,\dots w_{q-1})$ with $w=\sum _{i=1}^{q-1}w_i$ and sufficiently large integer $n$, we have that
\begin{align*}
    A_q(n,d,\overline{w})\ge (1-o(1))\frac{\binom{n}{n-w,w_1,\dots,w_{q-1}}}{\binom{n-t}{w-t}f(\overline{w},w-t)}.
\end{align*}
where $o(1)\rightarrow 0$ as $n\rightarrow \infty$.
\end{theorem}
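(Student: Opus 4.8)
The plan is to reformulate the existence of a near-optimal $(n,d,\overline w)_q$-CCC as the existence of a near-perfect matching, avoiding a controlled family of forbidden pairs, in a suitable auxiliary hypergraph, and then to invoke the conflict-free hypergraph matching theorems of Delcour--Postle \cite{Del-Pos22} and Glock, Joos, Kim, K\"uhn and Lichev \cite{Glo-Lic23}; the mild irregularity of the hypergraph can be absorbed either by a routine regularization or through Kahn's linear programming framework \cite{Kah96}. We may assume $3\le d\le 2w-1$, the case $d=1$ being trivial and larger $d$ lying outside the meaningful range, so that $1\le t\le w-1$ and in particular $w-t\ge 1$. Fix a $(\overline w,w-t)$-admissible vector $\overline{s}^{\,*}=(s_1^*,\dots,s_{q-1}^*)$ attaining the maximum in the definition of $f(\overline w,w-t)$, so that $f(\overline w,w-t)=\binom{w-t}{s_1^*,\dots,s_{q-1}^*}$, and set $\overline{t}^{\,*}=\overline w-\overline{s}^{\,*}$, which is $(\overline w,t)$-admissible. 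Let $H=H(n)$ be the hypergraph whose vertices are the \emph{colored $t$-configurations} $(T,c)$ with $T\in\binom{[n]}{t}$ and $c\colon T\to[q-1]$ of $(\overline w,t)$-admissible composition, and whose edges are the composition-$\overline w$ vectors $\mathbf{x}\in\Sigma_q^n$, each identified with the vertex set $\{(T,\mathbf{x}|_T):T\subseteq\supp(\mathbf{x}),\,|T|=t\}$ of size $\binom{w}{t}$. A short computation shows that a vertex of composition $\overline\tau$ lies in exactly $\binom{n-t}{w-t}\binom{w-t}{\overline w-\overline\tau}$ edges, where $\binom{w-t}{\overline w-\overline\tau}=(w-t)!/\prod_j(w_j-\tau_j)!$; hence $H$ has maximum degree $D:=\binom{n-t}{w-t}f(\overline w,w-t)=\Theta(n^{w-t})$, all its degrees lie in $[D/f(\overline w,w-t),\,D]$, and its codegree is $O(n^{w-t-1})=o(D)$, since any two distinct $t$-configurations span at least $t+1$ coordinates.

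The second step is to read off the code condition. For composition-$\overline w$ vectors $\mathbf{x},\mathbf{y}$ put $u=|\supp(\mathbf{x})\cap\supp(\mathbf{y})|$ and $a=|\{i:x_i=y_i\neq 0\}|$; one checks that $d(\mathbf{x},\mathbf{y})=2w-u-a$, and since $d=2w-2t+1$ a family $C$ of composition-$\overline w$ vectors is an $(n,d,\overline w)_q$-CCC if and only if $u+a\le 2t-1$ for every pair in $C$. If two edges of $H$ meet in a vertex then $a\ge t$, hence $u+a\ge 2t$, so a matching already forbids every such pair; what remains to be excluded is the family $\mathcal C$ of pairs $\{\mathbf{x},\mathbf{y}\}$ with $a\le t-1$ but $u+a\ge 2t$ (equivalently $u\ge t+1$). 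Fixing $\mathbf{x}$ and the overlap $\supp(\mathbf{x})\cap\supp(\mathbf{y})$ of some size $u\ge t+1$, the number of admissible $\mathbf{y}$ is $O(n^{w-u})=O(n^{w-t-1})$, so $\mathcal C$ is a family of $2$-edge conflicts that is $o(D)$-bounded, which is precisely the regime covered by the conflict-free matching machinery. By construction, a matching of $H$ avoiding every member of $\mathcal C$ is exactly an $(n,d,\overline w)_q$-CCC.

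The third step is the application. After regularizing $H$ to a $D$-regular, $\binom{w}{t}$-uniform hypergraph $\widehat H$ by adjoining dummy edges on fresh vertices --- which changes neither the order of the codegree nor $\mathcal C$ on the real edges --- the cited theorem produces a near-perfect matching $\widehat M$ of $\widehat H$ avoiding $\mathcal C$, together with the quasirandomness, inherent in the nibble, that every vertex is covered and its covering edge is distributed essentially uniformly among the edges at that vertex. Let $M$ be the set of real (code) edges in $\widehat M$; by the second step $M$ is a valid $(n,d,\overline w)_q$-CCC. To estimate $|M|$, note that each edge of $M$ is incident to $\binom{w}{t}$ vertices, while a vertex of composition $\overline\tau$ is covered by a real edge of $\widehat M$ with probability $(1-o(1))\binom{w-t}{\overline w-\overline\tau}/f(\overline w,w-t)$; summing over the $\binom{n}{t}\binom{t}{\overline\tau}$ vertices of each composition and applying the multinomial Vandermonde identity $\sum_{\overline\tau}\binom{t}{\overline\tau}\binom{w-t}{\overline w-\overline\tau}=\binom{w}{w_1,\dots,w_{q-1}}$ (the sum over $(\overline w,t)$-admissible $\overline\tau$) gives
\[
|M|\binom{w}{t}=(1-o(1))\,\frac{\binom{n}{t}}{f(\overline w,w-t)}\binom{w}{w_1,\dots,w_{q-1}},
\]
and hence $|M|\ge(1-o(1))\dfrac{\binom{n}{n-w,w_1,\dots,w_{q-1}}}{\binom{n-t}{w-t}f(\overline w,w-t)}$, which is the required lower bound.

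The main obstacle I anticipate is the verification, in full, that $\widehat H$ together with the conflict family $\mathcal C$ meets the precise hypotheses of the conflict-free matching theorem: the spread/boundedness conditions on $\mathcal C$, the codegree estimates, and --- most delicately --- extracting the quasirandom distribution of the covering edges, which is what pins $|M|$ to the Johnson value rather than to the strictly smaller quantity $\binom{n}{t}\binom{t}{\overline{t}^{\,*}}/\binom{w}{t}$ that one would get by merely covering the $\overline{t}^{\,*}$-configurations. A secondary subtlety, genuinely new for $q\ge 3$, is that two codewords can violate the minimum distance without agreeing on any colored $t$-configuration; it is exactly these pairs that make up $\mathcal C$, and their presence is what makes the conflict-free matching theorem (rather than the classical Frankl--R\"odl--Pippenger theorem) indispensable here.
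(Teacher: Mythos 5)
Your overall strategy matches the paper's: build a hypergraph on colored $t$-configurations (your $H$ is the paper's $\mathcal{H}$ restricted to vertices of positive degree, which is harmless), identify the pairs of codewords that violate the distance condition without sharing a $t$-configuration as the forbidden conflicts, and apply the Delcour--Postle / Glock--Joos--Kim--K\"uhn--Lichev conflict-free matching theorem. Your degree and codegree estimates are right, and your forbidden family $\mathcal{C}$ --- a subfamily of the paper's $\mathcal{F}=\{\{e^1,e^2\}:|\supp(e^1)\cap\supp(e^2)|>t\}$ --- works just as well.

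The gap is in your third step, and it is self-inflicted. You regularize $H$ to a $D$-regular hypergraph $\widehat{H}$ and then appeal to a ``quasirandomness, inherent in the nibble'' property to argue that a vertex of composition $\overline{\tau}$ is covered by a \emph{real} edge with probability $(1-o(1))\binom{w-t}{\overline{w}-\overline{\tau}}/f(\overline{w},w-t)$. That quasirandomness statement is not part of the conflict-free matching theorem as cited, and establishing it would require extra work. It is also unnecessary: the theorem (Corollary 1.17 of Delcour--Postle; Lemma 2.6 in the paper) is stated for \emph{irregular} hypergraphs with $\Delta(\mathcal{H})\le D$ and already outputs an $\mathcal{F}$-free matching of size at least $(1-D^{-\alpha})\,|E(\mathcal{H})|/D$. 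Taking $D=\binom{n-t}{w-t}f(\overline{w},w-t)$ and $|E(\mathcal{H})|=\binom{n}{n-w,w_1,\dots,w_{q-1}}$ gives the Johnson-type bound directly, with no regularization, no dummy edges, and no quasirandomness. Your worry that one might only recover the smaller quantity $\binom{n}{t}\binom{t}{\overline{t}^{\,*}}/\binom{w}{t}$ is a phantom created by the detour through $\widehat{H}$; applying the theorem to $H$ itself avoids it entirely. (A minor slip: given $a\le t-1$, the condition $u+a\ge 2t$ implies $u\ge t+1$ but is not equivalent to it; this does not affect your degree bound, which only uses the implication.)
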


Combining Theorems \ref{lem Uccc} and \ref{thm:ccc}, we have that

\begin{proposition}
    Under the assumption of \cref{thm:ccc},
    \begin{align*}
        \lim_{n\rightarrow\infty}\frac{\binom{n-t}{w-t}A_q(n,d,\overline{w})}{\binom{n}{n-w,w_1,\dots,w_{q-1}}}=\frac{1}{f(\overline{w},w-t)}.
    \end{align*}
\end{proposition}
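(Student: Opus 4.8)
The plan is a straightforward sandwich argument: the proposition follows by combining the upper bound of \cref{lem Uccc} with the lower bound of \cref{thm:ccc}. First I would check that the parameters match. Since $d$ is odd, $\frac{2w-d+1}{2}$ is already an integer, so the value $t=\lceil\frac{2w-d+1}{2}\rceil$ appearing in \cref{lem Uccc} coincides with $t=\frac{2w-d+1}{2}$ used in \cref{thm:ccc}, and both bounds are to be read in their odd-distance branches. Thus both theorems speak about the same quantity $A_q(n,d,\overline{w})$ with the same $t$ and the same denominator $\binom{n-t}{w-t}f(\overline{w},w-t)$.

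Next I would normalize both inequalities by multiplying through by the positive quantity $\binom{n-t}{w-t}\big/\binom{n}{n-w,w_1,\dots,w_{q-1}}$. The odd-$d$ case of \cref{lem Uccc} gives, for every $n$,
\[
\frac{\binom{n-t}{w-t}A_q(n,d,\overline{w})}{\binom{n}{n-w,w_1,\dots,w_{q-1}}}\le \frac{1}{f(\overline{w},w-t)},
\]
while \cref{thm:ccc} gives, for all sufficiently large $n$,
\[
\frac{\binom{n-t}{w-t}A_q(n,d,\overline{w})}{\binom{n}{n-w,w_1,\dots,w_{q-1}}}\ge (1-o(1))\,\frac{1}{f(\overline{w},w-t)},
\]
where $o(1)\to 0$ as $n\to\infty$. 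Because $q,d,w,\overline{w}$ are fixed, the integer $t$ and hence the constant $f(\overline{w},w-t)$ do not depend on $n$, so the right-hand side of the lower bound converges to $\frac{1}{f(\overline{w},w-t)}$ as $n\to\infty$.

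Finally, letting $n\to\infty$ and applying the squeeze theorem to the sandwiched quantity yields
\[
\lim_{n\to\infty}\frac{\binom{n-t}{w-t}A_q(n,d,\overline{w})}{\binom{n}{n-w,w_1,\dots,w_{q-1}}}=\frac{1}{f(\overline{w},w-t)},
\]
which is the claim. There is no substantive obstacle here; the only points needing a line of care are verifying that the two theorems use the matching value of $t$ and the matching (odd-distance) branch, and noting that $f(\overline{w},w-t)$ is a strictly positive constant, so that dividing by it and passing to the limit are both legitimate.
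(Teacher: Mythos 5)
Your proposal is correct and matches the paper's argument exactly: the proposition is stated as an immediate consequence of combining the upper bound of \cref{lem Uccc} (odd-$d$ branch) with the lower bound of \cref{thm:ccc}, which is precisely your sandwich argument. Your extra care in checking that $t=\lceil\frac{2w-d+1}{2}\rceil=\frac{2w-d+1}{2}$ for odd $d$ and that $f(\overline{w},w-t)$ is a positive constant independent of $n$ is appropriate but routine.
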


\paragraph{Outline of the paper.} The remaining part of this paper is organized as follows. In \cref{sec:matchinges}, we will illustrate the connection between codes and hypergraphs, and then introduce the results of \cite{Kah96,Del-Pos22,Glo-Lic23} on the existence of near-optimal matchings in hypergraphs. The proofs of Theorems \ref{thm:cwc} and \ref{thm:ccc} will be presented in Sections \ref{sec:cwc} and \ref{sec:ccc}, respectively. (Note that we present proof overviews of the two theorems at the beginning of Sections \ref{sec:cwc} and \ref{sec:ccc}.) We will conclude the paper with some open questions in \cref{sec:con}.

\section{Codes and matchings in hypergraphs}\label{sec:matchinges}

\subsection{Constant weight codes, constant composition codes, and hypergraphs}

\noindent The main task of this subsection is to establish a connection between CWCs (resp. CCCs) and hypergraphs.

Let $J_q(n,w):=\{\mathbf{x}\in\Sigma_q^n:\wt(\mathbf{x})=w\}$ denote the set formed by all elements in $\Sigma_q^n$ with Hamming weight $w$. Similarly, given a composition $\overline{w}=(w_1,\ldots,w_{q-1})$, let
$$J_q(n,\overline{w}):=\{\mathbf{x}\in \Sigma_q^n:\mathbf{x}~\text{has~composition}~\overline{w}\}.$$
It is easy to see that $|J_q(n,w)|=\bi{n}{w}(q-1)^w$ and
\begin{align*}
   |J_q(n,\overline{w})|=\binom{n}{w}\binom{w}{w_1}\binom{w-w_1}{w_2}\cdots\binom{w-w_1-\cdots-w_{q-2}}{w_{q-1}}=\binom{n}{n-w,w_1,\ldots,w_{q-1}}.
\end{align*}

A hypergraph $\ma{H}$ can be viewed as a pair $\ma{H}=(V(\ma{H}),E(\ma{H}))$, where the vertex set $V(\ma{H})$ is a finite set and the edge set $E(\ma{H})$ is a collection of subsets of $V(\ma{H})$. 
A hypergraph is {\it $l$-bounded} if every edge contains at most $l$ vertices; moreover, it is {\it $w$-uniform} if every edge contains exactly $w$ vertices. A hypergraph $\ma{H}$ is {\it $n$-partite} if its vertex set $V(\ma{H})$ admits a partition $V(\ma{H})=\cup_{i=1}^n V_i$ such that every edge of $\mathcal{H}$ has at most one intersection with each $V_i$.

\begin{definition}[Hypergraph associated with $J_q(n,w)$]\label{def:hypergraph-cwc}
For positive integers $q,n,w$ with $w\le n$, let $\mathcal{J}_q(n,w)$ denote the complete $n$-partite $w$-uniform hypergraph with equal part size $q-1$, where we assume without loss of generality that $V(\mathcal{J}_q(n,w))$ admits a partition $V(\mathcal{J}_q(n,w))=\cup_{i=1}^n V_i$ such that for each $i\in[n]$, $V_i=\{(i,a):a\in [q-1]\}$. With this notation,
$$E(\mathcal{J}_q(n,w))=\big\{\{(i_1,a_{i_1}),\ldots,(i_w,a_{i_w})\}:1\le i_1<\cdots<i_w\le n,a_{i_1},\ldots,a_{i_w}\in [q-1]\}\big\}.$$

\noindent Clearly, $|V(\mathcal{J}_q(n,w))|=n(q-1)$ and $|E(\mathcal{J}_q(n,w))|=\bi{n}{w}(q-1)^w$.
\end{definition}

For a vector $\mathbf{x}=(x_1,\ldots,x_n)\in\Sigma_q^n$, the {\it support} of $\mathbf{x}$ is denoted by $\supp(\mathbf{x}):=\{i\in[n]:x_i\neq 0\}$. Consider the mapping $\pi:J_q(n,w)\rightarrow\mathcal{J}_q(n,w)$ which sends a vector $$\mathbf{x}=(x_1,\ldots,x_n)\in J_q(n,w)$$ to an edge
\begin{align}\label{eq:mapping}
    \pi(\mathbf{x}):=\{(i,x_i):i\in \supp(x)\}\in \mathcal{J}_q(n,w).
\end{align}
It is routine to check that $\pi$ is a bijection.

For $e\in\mathcal{J}_q(n,w)$, denote $\supp(e):=\supp(\pi^{-1}(e))$, which is precisely the subset of $[n]$ recording the vertex parts $V_i,~i\in [n]$ such that $V_i\cap e\neq\emptyset$. More generally, for every $f\subseteq\cup_{i=1}^n V_i$ with $|f\cap V_i|\le 1$ for each $i\in [n]$, define
\begin{align}\label{eq:support}
    \supp(f):=\{i\in[n]:|f\cap V_i|=1\}.
\end{align}

For a code $C\subseteq J_q(n,w)$, let $\pi(C)=\{\pi(\mathbf{x}):\mathbf{x}\in C\}\subseteq\mathcal{J}_q(n,w)$. It is not hard to check that for $\mathbf{x},\mathbf{y}\in J_q(n,w)$,
\begin{align}\label{eq:distance}
    d(\mathbf{x},\mathbf{y})=2w-|\pi(\mathbf{x})\cap\pi(\mathbf{y})|-|\supp(\mathbf{x})\cap\supp(\mathbf{y})|.
\end{align}

We frequently will make use of the equations \eqref{eq:mapping}, \eqref{eq:support}, and \eqref{eq:distance}. Let us conclude this subsection with the following definition.

\begin{definition}[Hypergraph associated with $J_q(n,\overline{w})$]\label{def:hypergraph-ccc}
    Note that $J_q(n,\overline{w})\subseteq J_q(n,w)$. Let $\mathcal{J}_q(n,\overline{w}):=\pi(J_q(n,\overline{w}))\subseteq \mathcal{J}_q(n,w)$ be the image of $J_q(n,\overline{w})$ given by $\pi$. Then $\mathcal{J}_q(n,\overline{w})$ is an $n$-partite $w$-uniform hypergraph with vertex set $V(\mathcal{J}_q(n,\overline{w}))=V(\mathcal{J}_q(n,w))=\cup_{i=1}^n V_i$, where the $V_i$'s are given in \cref{def:hypergraph-cwc}, and edge set $E(\mathcal{J}_q(n,\overline{w}))=\{\pi(\mathbf{x}):\mathbf{x}\in J_q(n,\overline{w})\}$. Clearly, $|V(\mathcal{J}_q(n,\overline{w}))|=nq$ and $|E(\mathcal{J}_q(n,\overline{w}))|=|J_q(n,\overline{w})|=\binom{n}{n-w,w_1,\dots,w_{q-1}}$.
\end{definition}

\subsection{Kahn's theorem on the existence of near-optimal matchinges}

\noindent For a hypergraph $\ma{H}$ and a vertex $v\in V(\ma{H})$, the {\it degree} $\deg (v)$ of $v$ is the number of edges in $E(\ma{H})$ that contain $v$. Let $\Delta(\ma{H})$ be the {\it maximum degree} of $\ma{H}$, i.e., $\Delta(\ma{H})=\max_{v\in V(\ma{H})}\deg(v)$. For distinct vertices $v_1,v_2\in V(\ma{H})$, let $\deg(v_1,v_2)$ be the number of edges in $E(\ma{H})$ that contain both $v_1$ and $v_2$. Let $\cod (\ma{H})$ be the {\it maximum codegree} of $\ma{H}$, i.e., $\cod(\ma{H})=\max_{v_1\neq v_2\in V(\ma{H})}\deg(v_1,v_2).$

A {\it matching} of $\ma{H}$ is a set of pairwise disjoint edges of $E(\ma{H})$. The {\it matching number} of $\ma{H}$, denoted by $\nu(\ma{H})$, is the size of the maximum matching of $\ma{H}$. We say that a function $f:E(\mathcal{H})\rightarrow [0,1]$ is a {\it fractional matching} of $\mathcal{H}$ if for every $v\in V(\mathcal{H})$, $$\sum_{e\in E(\mathcal{H}):~v\in e}f(e)\le 1.$$ For a fractional matching $f$ of $\mathcal{H}$, let $f(\mathcal{H}):=\sum_{e\in E(\mathcal{H})}f(e)$ and
$$\alpha(f):=\max_{x\neq y \in V(\mathcal{H})} \sum_{e\in E(\mathcal{H}):~x,y\in e} f(e).$$
The following result of Kahn showed that in certain case $\nu(\mathcal{H})$ can be lower bounded by $f(\mathcal{H})$.

\begin{lemma}[Kahn's theorem, see Theorem 1.2 in \cite{Kah96}]\label{thm:Kahn}
    For every integer $l\ge 1$ and every real $\epsilon>0$, there exists a real $\sigma$ such that whenever $\ma{H}$ is a $l$-bounded hypergraph and $f$ is a fractional matching of $\ma{H}$ with $\alpha(f)<\sigma$, then $\nu(\mathcal{H})>(1-\epsilon)f(\mathcal{H}).$
\end{lemma}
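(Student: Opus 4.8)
This is the fractional (Kahn) form of the Frankl–R\"odl–Pippenger theorem, and the natural route is the semi-random ``nibble'' method in which the random edge–selection at each step is biased according to the fractional matching $f$. Given $l$ and $\epsilon$, I would first reduce to the uniform case: adjoin $l-|e|$ fresh degree-one vertices to each short edge $e$, so that $\mathcal{H}$ becomes $l$-uniform while $f$, extended by the same values, stays a fractional matching with the same $f(\mathcal{H})$ and $\alpha(f)$ (the new vertices lie in a unique edge). Discard edges with $f(e)=0$ and write $d_f(v):=\sum_{e\ni v}f(e)\le 1$ for the fractional degree. Fix a small constant $p=p(l,\epsilon)>0$ and a large integer $T=T(l,\epsilon)$, to be specified; the bound $\sigma$ will be chosen last, small in terms of $l,\epsilon,p,T$ — this is the quantifier order the statement demands.

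\emph{One nibble.} Form $R\subseteq E(\mathcal{H})$ by including each edge $e$ independently with probability $p f(e)\le p\le 1$, and let $M\subseteq R$ consist of those edges of $R$ meeting no other edge of $R$; then $M$ is a matching. Here $\mathbb{E}|R|=p f(\mathcal{H})$, while the expected number of intersecting ordered pairs in $R$ is at most $\sum_v\sum_{e\ne e'\ni v}p^2 f(e)f(e')\le p^2\sum_v d_f(v)^2\le p^2\sum_v d_f(v)=p^2 l\, f(\mathcal{H})$, so $\mathbb{E}|M|\ge p(1-pl)f(\mathcal{H})$; taking $p\le \epsilon/(2l)$ makes this at least $(1-\epsilon/2)p\, f(\mathcal{H})$. (Only $p$ small is needed here; $\alpha$ small enters in the next step.)

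\emph{Residual hypergraph.} Let $\mathcal{H}'$ be $\mathcal{H}$ with every $R$-covered vertex and every edge through such a vertex deleted, and on a surviving edge $e$ put $f'(e):=f(e)\big/\prod_{v\in e}(1-p\,d_f(v))$. Because $\alpha(f)<\sigma$, the events ``$v$ covered'' for $v\in e$ are only weakly dependent, so $\Pr[e\text{ survives}]=\prod_{v\in e}(1-p\,d_f(v))(1+O(\sigma))$ (the implied constant depends on $p,l$), whence $\mathbb{E}\sum_{e\ni v,\,e\in\mathcal{H}'}f'(e)=(1+O(\sigma))d_f(v)$ and $\mathbb{E}f'(\mathcal{H}')\ge (1-pl-O(\sigma))f(\mathcal{H})$, while trivially $\alpha(f')\le (1-p)^{-l}\alpha(f)$. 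Since each indicator $\mathbf 1[e\in R]$ changes $\sum_{e\ni v,\,e\in\mathcal{H}'}f'(e)$, the total mass $f'(\mathcal{H}')$, and each pair-sum $\sum_{e\ni x,y}f'(e)$ by only $O(1)$, the bounded-differences (Azuma/McDiarmid) inequality plus a union bound over the $|V|$ residual degrees, the $\binom{|V|}{2}$ pair-sums, and the total mass shows that with positive probability $f'(\mathcal{H}')\ge (1-pl-o(1))f(\mathcal{H})$, every residual fractional degree is at most $1$ after a uniform rescaling by $1-o(1)$, and the rescaled $f'$ has $\alpha\le (1-p)^{-l}\alpha(f)+o(1)$. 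Fix such an outcome, call it $(\mathcal{H}_1,f_1)$, and iterate from $(\mathcal{H},f)=(\mathcal{H}_0,f_0)$.

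\emph{Iteration and the obstacle.} After $T$ rounds the matchings $M_1,\dots,M_T$ lie on pairwise disjoint vertex sets, so $M_1\cup\dots\cup M_T$ is a matching and $\sum_{i=1}^{T}|M_i|\ge (1-\epsilon/2)p\sum_{i=0}^{T-1}(1-pl-o(1))^i f(\mathcal{H})$. Choosing $p$ small enough that $(1-pl)^{-1}(1-\epsilon/2)>1-\epsilon$, then $T$ large enough that the tail $\sum_{i\ge T}(1-pl)^i$ is negligible, and finally $\sigma$ small enough that all accumulated $o(1)$-terms over the finitely many rounds — in particular the bound $\alpha(f_i)\le (1-p)^{-li}\alpha(f)\le (1-p)^{-lT}\sigma$ kept small throughout — stay below the slack just created, gives $\nu(\mathcal{H})>(1-\epsilon)f(\mathcal{H})$. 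The single-round estimates are routine; the genuine difficulty is the cross-round bookkeeping, namely showing that the residual data $(\mathcal{H}_i,f_i)$ still satisfies an $\alpha$-bound strong enough to re-run the nibble and that the concentration failures can all be absorbed by one union bound — this is exactly where the hypothesis $\alpha(f)<\sigma$ is used, and it is what makes the errors close.
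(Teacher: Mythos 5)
The paper does not prove this lemma; it is quoted verbatim as Theorem 1.2 of Kahn's 1996 paper and used as a black box, so there is no internal proof to compare yours against. Your sketch follows the expected route --- a R\"odl nibble in which each edge is sampled with probability proportional to $f(e)$ --- and the reduction to the $l$-uniform case plus the single-round expectation count $\mathbb{E}|M|\ge p(1-pl)f(\mathcal{H})$ are fine.

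The gap is the concentration step, and it is a real one rather than a routine verification. You propose bounded differences with a union bound over all $|V|$ residual degrees, all $\binom{|V|}{2}$ residual pair-sums, and the total mass $f'(\mathcal{H}')$, asserting that each indicator $\mathbf{1}[e\in R]$ perturbs each of these by $O(1)$. With $|E(\mathcal{H})|$ coordinates and Lipschitz constants $O(1)$, McDiarmid gives a deviation of order $\sqrt{|E(\mathcal{H})|}$ for $f'(\mathcal{H}')$, and nothing in the hypotheses forces $f(\mathcal{H})\gg\sqrt{|E(\mathcal{H})|}$: the statement is uniform over \emph{all} finite $l$-bounded hypergraphs, and $\alpha(f)<\sigma$ gives no absolute lower bound on $f(\mathcal{H})$. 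Likewise, the best single-event failure probability your scheme could hope to produce is on the order of $\exp(-\Omega(1/\sigma))$, while the number of events you union-bound over is unbounded in terms of $\sigma$, $l$, $\epsilon$, so ``with positive probability all good events hold'' does not follow as stated. To make a nibble close here you must either avoid the global union bound entirely --- track only local quantities conditionally and aggregate across rounds by linearity of expectation --- or use the finer information you in fact have: for $e\not\ni u$, the Lipschitz constant of $u$'s residual degree with respect to $\mathbf{1}[e\in R]$ is $O(l\,\alpha(f))$, not $O(1)$ (only edges through $u$ that also meet $e$ can be deleted), and each indicator has tiny variance $pf(e)$, so a variance-sensitive inequality (Bernstein/Talagrand) rather than plain McDiarmid is needed. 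That factor of $\sigma$ in the Lipschitz constants is exactly what lets the concentration beat the event count and is where the dependence $\sigma=\sigma(l,\epsilon)$ is actually earned. You flag the ``cross-round bookkeeping'' as the genuine difficulty, which is honest, but as written even the single-round concentration is incomplete.
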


To prove \cref{thm:cwc}, we will make use of the following consequence of \cref{thm:Kahn}, which shows that every hypergraph with large maximum degree and small codegree has a relatively large matching number.

\begin{lemma}\label{cor:Kahn}
    Let $\mathcal{H}$ be a $l$-bounded hypergraph with $\cod(\mathcal{H})/\Delta(\mathcal{H})=o(1)$, where $o(1)\rightarrow 0$ as $|V(\mathcal{H})|\rightarrow\infty$. Then we have $\nu(\mathcal{H})>(1-o(1))|E(\mathcal{H})|/\Delta(\mathcal{H})$.
\end{lemma}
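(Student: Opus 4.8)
\textbf{Proof plan for \cref{cor:Kahn}.}

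The plan is to derive \cref{cor:Kahn} from Kahn's theorem (\cref{thm:Kahn}) by exhibiting an appropriate fractional matching of $\mathcal{H}$ with small $\alpha$-value. The natural candidate is the uniform fractional matching that assigns the weight $f(e) = 1/\Delta(\mathcal{H})$ to every edge $e \in E(\mathcal{H})$. First I would verify that this $f$ is indeed a fractional matching: for each vertex $v \in V(\mathcal{H})$ we have $\sum_{e \ni v} f(e) = \deg(v)/\Delta(\mathcal{H}) \le 1$ by the definition of $\Delta(\mathcal{H})$, so the constraint holds. Next I would compute the two quantities appearing in Kahn's theorem. On one hand, $f(\mathcal{H}) = \sum_{e \in E(\mathcal{H})} f(e) = |E(\mathcal{H})|/\Delta(\mathcal{H})$. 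On the other hand, for any two distinct vertices $x, y \in V(\mathcal{H})$, $\sum_{e \ni x, y} f(e) = \deg(x,y)/\Delta(\mathcal{H}) \le \cod(\mathcal{H})/\Delta(\mathcal{H})$, and hence $\alpha(f) \le \cod(\mathcal{H})/\Delta(\mathcal{H}) = o(1)$ by hypothesis.

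The remaining point is a matter of quantifiers: Kahn's theorem is stated for a fixed $\epsilon > 0$ and produces a threshold $\sigma = \sigma(l, \epsilon)$, whereas \cref{cor:Kahn} asserts an $o(1)$-type conclusion as $|V(\mathcal{H})| \to \infty$. I would phrase this cleanly as follows: fix $l$ and let $\epsilon > 0$ be arbitrary; by \cref{thm:Kahn} there is $\sigma = \sigma(l,\epsilon) > 0$ so that any $l$-bounded hypergraph $\mathcal{H}$ carrying a fractional matching $f$ with $\alpha(f) < \sigma$ satisfies $\nu(\mathcal{H}) > (1-\epsilon) f(\mathcal{H})$. Since $\cod(\mathcal{H})/\Delta(\mathcal{H}) \to 0$, for $|V(\mathcal{H})|$ sufficiently large we have $\alpha(f) \le \cod(\mathcal{H})/\Delta(\mathcal{H}) < \sigma$, and therefore $\nu(\mathcal{H}) > (1-\epsilon)|E(\mathcal{H})|/\Delta(\mathcal{H})$. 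As $\epsilon$ was arbitrary, letting $\epsilon \to 0$ (slowly, as $|V(\mathcal{H})| \to \infty$) yields $\nu(\mathcal{H}) > (1-o(1))|E(\mathcal{H})|/\Delta(\mathcal{H})$, as claimed.

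I do not anticipate a genuine obstacle here; the statement is essentially a repackaging of \cref{thm:Kahn}. The only subtlety worth being careful about is the interplay of the asymptotic regime (the $o(1)$ is in $|V(\mathcal{H})|$, and $l$ is a fixed constant throughout, which is exactly the setting in which this corollary will later be applied to $\mathcal{J}_q(n,w)$ with $l = w$), together with the standard trick of converting "for every $\epsilon$, eventually..." into a single $o(1)$ bound by choosing a sequence $\epsilon = \epsilon(|V(\mathcal{H})|) \to 0$ decaying slowly enough. If one prefers, one can also avoid Kahn's theorem being invoked with a shrinking $\epsilon$ by instead stating the corollary with an explicit but unspecified error term for each fixed $\epsilon$; the formulation above is cleaner and suffices for the applications.
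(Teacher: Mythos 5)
Your proposal is correct and follows exactly the paper's own argument: take the uniform fractional matching $f\equiv 1/\Delta(\mathcal{H})$, note that $f(\mathcal{H})=|E(\mathcal{H})|/\Delta(\mathcal{H})$ and $\alpha(f)=\cod(\mathcal{H})/\Delta(\mathcal{H})$, and invoke Kahn's theorem. The only difference is that you spell out the standard quantifier bookkeeping (letting $\epsilon\to 0$ slowly with $|V(\mathcal{H})|$), which the paper leaves implicit; this is a cosmetic rather than a substantive departure.
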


\begin{proof}
It is easy to see that the constant function $f_c\equiv 1/\Delta(\mathcal{H})$ is a valid fractional matching of $\mathcal{H}$. Moreover, we have that $f_c(\mathcal{H})=|E(\mathcal{H})|/\Delta(\mathcal{H})$ and
\begin{align*}
\alpha(f_c)=\max_{x\neq y\in V(\mathcal{H})} \frac{\deg (x,y)}{\Delta(\mathcal{H})}=\frac{\cod(\mathcal{H})}{\Delta(\mathcal{H})}.
\end{align*}
Then the conclusion follows directly by applying \cref{thm:Kahn} with $f:=f_c$.
\end{proof}

\subsection{Near-optimal matchinges avoiding forbidden configurations}

\noindent Let $\mathcal{H}$ be a hypergraph. A {\it configuration hypergraph} for $\mathcal{H}$ is a hypergraph $\mathcal{F}$ with vertex set $V(\mathcal{F})=E(\mathcal{H})$ and edge set $E(\mathcal{F})$ formed by some subsets of $E(\mathcal{H})$.
We say that a matching of $\mathcal{H}$ is {\it $\mathcal{F}$-free} if it contains no edge in $E(\mathcal{F})$ as a subset. Note that $E(\mathcal{F})$ is defined by the set of ``bad'' configurations that we want to avoid; in particular, in the proof of \cref{thm:ccc} $E(\mathcal{F})$ will be defined by the pairs of vectors in $J_q(n,\overline{w})$ that violate the minimum distance condition of a CCC. \cref{thm:conflict-free} below presents several sufficient conditions on which $\mathcal{H}$ and $\mathcal{F}$ should satisfy so that one can find a large $\mathcal{F}$-free matching in $\mathcal{H}$.

We will need some necessary definitions before presenting \cref{thm:conflict-free}. Note that all of them can be found in \cite{Del-Pos22}.

\begin{definition}[see \cite{Del-Pos22}]
    Let $\mathcal{H}$ be a $k$-bounded hypergraph and $\mathcal{F}$ be a $l$-bounded configuration hypergraph of $\mathcal{H}$.
    \begin{itemize}
        \item For every $i\in[k]$, the {\it $i$-degree} of a vertex $v\in V(\mathcal{H})$ is defined as the number of edges in $E(\mathcal{H})$ of size $i$ that contain $v$. The {\it maximum $i$-degree} of $\mathcal{H}$, denoted as $\Delta_i(\mathcal{H})$, is the maximum $i$-degree over all $v\in V(\mathcal{H})$, i.e.,
\begin{equation*}
    \Delta_i(\mathcal{H})=\max\limits_{v\in V(\mathcal{H})}|\{e\in E(\mathcal{H}):v\in e,~|e|=i\}|.
\end{equation*}

        \item For every $2\le j<r\le l$, the {\it maximum $(r,j)$-codegree} of $\mathcal{F}$ is defined as
        $$\Delta_{r,j}(\mathcal{F}):=\max\limits_{S\in \binom{V(\mathcal{F})}{j}}|\{e\in E(\mathcal{F}):S\subseteq e,~|e|=r\}|. $$

        \item The {\it $2$-codegree} of $v\in V(\mathcal{H})$ and $e\in V(\mathcal{F})=E(\mathcal{H})$ with $v\notin e$ is the number of edges in $E(\mathcal{F})$ of size $2$ that contains both $e$ and some $e'\in V(\mathcal{F})$ with $v\in e'$. The {\it maximum $2$-codegree} of $\mathcal{H}$ with respect to $\mathcal{F}$ is the maximum $2$-codegree over all $v\in V(\mathcal{H})$ and $e\in V(\mathcal{F})$ with $v\notin e$.

        \item The {\it common $2$-degree} of distinct $e,e'\in V(\mathcal{F})$ is the number of $w\in V(\mathcal{F})$ such that both $\{e,w\}$ and $\{e',w\}$ are edges of $\mathcal{F}$.
        The {\it maximum common $2$-degree} of $\mathcal{F}$ is the maximum common $2$-degree over all distinct $e,e'\in V(\mathcal{F})$.
    \end{itemize}
\end{definition}

\begin{lemma}[see Corollary 1.17 in \cite{Del-Pos22}, also Theorem 1.3 in \cite{Glo-Lic23}]\label{thm:conflict-free}
    For all integers $k,l\ge 2$, and real $\beta> 0$, there exist an integer $D_{\beta}\ge 0$ and a real $\alpha> 0$ such that following holds for all $D\ge D_{\beta}$.

    Let $\mathcal{H}$ be an $k$-bounded hypergraph with $\Delta(\mathcal{H})\le D$ and $\cod(\mathcal{H})\le D^{1-\beta}$. Let $\mathcal{F}$ be a $l$-bounded configuration hypergraph of $\mathcal{H}$. Suppose that the following conditions hold:
    \begin{enumerate}
        \item [{\rm (1)}] $\Delta_{i}(\mathcal{F})\le \alpha \cdot D^{i-1}\log D$ for all $2\le i\le l$;

        \item [{\rm (2)}] $\Delta_{r,j}(\mathcal{F})\le D^{r-j-\beta}$ for all $2\le j<r\le l$;

        \item [{\rm (3)}] The maximum $2$-codegree of $\mathcal{H}$ with respect to $\mathcal{F}$ is at most $D^{1-\beta}$;

        \item [{\rm (4)}] The maximum common $2$-degree of $\mathcal{F}$ is at most $D^{1-\beta}$.
    \end{enumerate}
  Then there exists an $\mathcal{F}$-free matching of $\mathcal{H}$ with size at least $\frac{|E(\mathcal{H})|}{D}\cdot (1-D^{-\alpha})$.
\end{lemma}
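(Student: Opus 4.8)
To prove \cref{thm:conflict-free} I would run the semi-random (``nibble'') method directly, rather than invoke \cref{thm:Kahn} as a black box: ordinary near-optimal matchings in $k$-bounded hypergraphs with $\cod(\mathcal{H})\le D^{1-\beta}$ are already produced by the Frankl-R\"odl-Pippenger theorem (and, via the fractional relaxation, by \cref{thm:Kahn} and \cref{cor:Kahn}), so the new content of the statement lies entirely in \emph{also} avoiding every edge of $\mathcal{F}$. The strategy is to build the matching in $\Theta(\epsilon^{-1}\log D)$ rounds, where in each round one picks a sparse random batch of edges and keeps exactly those that neither overlap another selected edge nor complete a forbidden configuration, and to show that hypotheses (1)--(4) are precisely what makes the extra ``conflict deletions'' negligible next to the usual ``matching deletions.''

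Concretely, I would maintain an available sub-hypergraph $\mathcal{H}_s\subseteq\mathcal{H}$ and an $\mathcal{F}$-free partial matching $M_s$, with $\mathcal{H}_0=\mathcal{H}$ and $M_0=\emptyset$. In round $s$ put each edge of $\mathcal{H}_s$ into a random set $R_s$ independently with probability $p/D_s$, where $p=\epsilon$ and $D_s\approx D(1-p)^{s}$ tracks the current typical maximum degree; then add to the matching those $e\in R_s$ that meet no other edge of $R_s$ and that create no edge of $\mathcal{F}$ together with the already-chosen edges; finally delete from $\mathcal{H}_s$ every edge through a newly matched vertex together with every edge that now lies in a forbidden configuration all of whose other edges have been chosen. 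The heart of the argument is a family of trajectory statements: with high probability, after each round the number of surviving edges, the maximum $i$-degrees $\Delta_i(\mathcal{H}_s)$, the codegree $\cod(\mathcal{H}_s)$, and \emph{all} the $\mathcal{F}$-side quantities --- $\Delta_i(\mathcal{F}_s)$, $\Delta_{r,j}(\mathcal{F}_s)$, the maximum $2$-codegree and the maximum common $2$-degree of the ``still-dangerous'' part of $\mathcal{F}$, together with mixed quantities counting how conflicts attach to the growing matching --- all shrink by the predicted power of $(1-p)$ and stay within a $D^{-\Omega(1)}$ relative error of their deterministic trajectories.

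This is where the four hypotheses enter. Condition (1) makes forbidden configurations globally sparse enough that a typical nibble edge is flagged for conflict-deletion with probability only $O(D^{-1}\log D)$ through each potential configuration, so the total conflict-deletions in a round are smaller than the matching-deletions by a factor $(\log D)\cdot D^{-\Omega(\beta)}$; condition (2) forbids configurations from piling up on any smaller sub-configuration, which would otherwise kill some edge ``too early''; and conditions (3) and (4) bound the second-order effects --- respectively how a conflict at one edge can propagate to a distant vertex, and how two edges can share many common conflict-partners --- so that conflict-deletions do not cascade. Each trajectory statement would be proved by a one-step conditional-expectation computation (a finite inclusion--exclusion over which of the $O(1)$-sized relevant families of edges and configurations are hit by $R_s$) followed by a Talagrand-type concentration inequality, for which the bounds $\cod(\mathcal{H})\le D^{1-\beta}$ and the $\mathcal{F}$-degree bounds supply the small ``certificates'' that keep deviations small relative to the means; a union bound over the $\mathrm{poly}(|V(\mathcal{H})|)$ tracked quantities and the $O(\epsilon^{-1}\log D)$ rounds then gives everything simultaneously with high probability, and one stops the process (finishing by a short greedy clean-up) once the maximum degree has dropped to $D^{\Omega(1)}$.

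After all rounds, at most a $\max\{e^{-\Theta(\epsilon^{-1})},D^{-\Omega(1)}\}$ fraction of $V(\mathcal{H})$ is uncovered; since $\Delta(\mathcal{H})\le D$, the constant function $1/D$ is a fractional matching of value $|E(\mathcal{H})|/D$ --- exactly the bound realized in the spirit of \cref{cor:Kahn} --- and the nibble realizes a $(1-D^{-\Omega(1)})$ fraction of it, so after renaming constants and taking $D_\beta$ large one gets an $\mathcal{F}$-free matching of size at least $\frac{|E(\mathcal{H})|}{D}(1-D^{-\alpha})$, as required. The hard part --- and the reason this is a theorem of \cite{Del-Pos22} and \cite{Glo-Lic23} rather than a short corollary of \cref{thm:Kahn} --- is exactly the trajectory step for this entire hierarchy of $\mathcal{F}$-parameters and their mixed analogues: one must choose the right set of tracked quantities, verify that (1)--(4) are preserved with enough slack after each round, and prevent the $D^{-\Omega(1)}$ errors from accumulating over $\Theta(\log D)$ rounds. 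Accordingly I would not re-derive this from scratch but follow the conflict-system formalism of Delcour and Postle (or the equivalent framework of Glock, Joos, Kim, K\"uhn, and Lichev), checking only that hypotheses (1)--(4) imply the axioms required there.
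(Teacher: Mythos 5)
This statement is not proved in the paper at all: it is imported verbatim as Corollary~1.17 of \cite{Del-Pos22} (equivalently Theorem~1.3 of \cite{Glo-Lic23}) and used as a black box, so there is no ``paper's own proof'' to compare against. Your conclusion --- that one should not re-derive it from scratch but instead invoke the conflict-free matching machinery of those two papers after checking that hypotheses (1)--(4) match their axioms --- is precisely what the authors do.

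Your nibble-method overview is a fair high-level account of how the cited theorem is actually established, and your reading of the four hypotheses (global sparsity of forbidden configurations from (1), no clustering of configurations over sub-configurations from (2), and bounded second-order propagation and clustering from (3) and (4)) is sensible. But be aware that what you describe as ``the hard part'' is not a side remark: the entire content of the theorem is the trajectory step --- selecting the right hierarchy of $\mathcal{H}$- and $\mathcal{F}$-side quantities to track, proving one-round expectation formulas for each, and supplying Talagrand-type concentration with certificates small enough that multiplicative $D^{-\Omega(1)}$ errors survive $\Theta(\log D)$ rounds --- and none of that appears in your sketch beyond naming the ingredients. As a self-contained proof, the proposal therefore has a genuine gap in exactly the place you flag; as a summary of where the lemma comes from and why the stated hypotheses are the ``right'' ones, it is accurate and consistent with the way the paper uses the result.
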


\section{Proof of \cref{thm:cwc} via Kahn's theorem}\label{sec:cwc}

\paragraph{Proof overview.} We will prove \cref{thm:cwc} via \cref{thm:Kahn}. The rough idea is to make use of the connection between CWCs and hypergraphs built in \eqref{eq:mapping}, \eqref{eq:support}, and \eqref{eq:distance}. Then the problem of constructing a large CWC can be converted to the problem of finding a large matching in an auxiliary hypergraph $\mathcal{H}$ defined as below (see \cref{lem:matching-cwc} for details). The latter problem can be solved by applying \cref{thm:Kahn} (see \cref{lem:large-matching-via-Kahn} for details).

\vspace{10pt}

To proceed, recall the hypergraph $\mathcal{J}_q(n,w)$ defined in \cref{def:hypergraph-cwc}. We will first construct a hypergraph $\mathcal{G}$ with vertex set $V(\mathcal{G})= V(\mathcal{J}_q(n,w))\cup [n]$ and edge set
\begin{align*}
E(\mathcal{G})=\{e\cup \supp(e):e\in\mathcal{J}_q(n,w)\}.
\end{align*}
As for every $e\in\mathcal{J}_q(n,w)$, $|e|=|\supp(e)|=w$, $\mathcal{G}$ is a $2w$-uniform hypergraph with $|V(\mathcal{G})|=nq$ and $|E(\mathcal{G})|=|E(\mathcal{J}_q(n,w))|=\binom{n}{w}(q-1)^w$. Let $\mathcal{H}$ be the hypergraph defined as
\begin{align*}
V(\mathcal{H})=\binom{V(\mathcal{G})}{2t}=\binom{(\cup_{i=1}^n V_i)\cup [n]}{2t}  \end{align*}
and
\begin{align*}
E(\mathcal{H})=\left\{\binom{e'}{2t}:e'\in E(\mathcal{G})\right\}=\left\{\binom{e\cup\supp(e)}{2t}:e\in\mathcal{J}_q(n,w)\right\}.
\end{align*}
Hence, $\mathcal{H}$ is a $\binom{2w}{2t}$-uniform hypergraph with $|E(\mathcal{H})|=\binom{n}{w}(q-1)^w$.

The next lemma shows that every matching of $\mathcal{H}$ defines an $(n,d,w)_q$-CWC.

\begin{lemma}\label{lem:matching-cwc}
    Every matching $\mathcal{M}$ of $\mathcal{H}$ defines an $(n,d,w)_q$-CWC $C_{\mathcal{M}}\subseteq J_q(n,w)$ with $|C_{\mathcal{M}}|=|\mathcal{M}|$.
\end{lemma}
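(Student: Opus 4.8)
The plan is to exhibit an explicit bijection between $E(\mathcal{H})$ and $J_q(n,w)$, and then to check that, under this bijection, the disjointness of two edges of $\mathcal{H}$ is exactly the minimum distance requirement of a CWC.

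First I would record the bijection. For $\mathbf{x}\in J_q(n,w)$ set $e'_{\mathbf{x}}:=\pi(\mathbf{x})\cup\supp(\mathbf{x})\in E(\mathcal{G})$, with $\pi$ the bijection of \eqref{eq:mapping} and $\supp$ as in \eqref{eq:support}. Since $\pi(\mathbf{x})\subseteq\cup_{i=1}^n V_i$ and $\supp(\mathbf{x})\subseteq[n]$ lie in disjoint parts of $V(\mathcal{G})$, and $|e'_{\mathbf{x}}|=2w\ge 2t$, the set $e'_{\mathbf{x}}$ is recovered from $\binom{e'_{\mathbf{x}}}{2t}$ as the union of its members, and then $\mathbf{x}=\pi^{-1}(e'_{\mathbf{x}}\cap(\cup_{i=1}^n V_i))$; hence $\mathbf{x}\mapsto\binom{e'_{\mathbf{x}}}{2t}$ is a bijection from $J_q(n,w)$ onto $E(\mathcal{H})$. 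Given a matching $\mathcal{M}$ of $\mathcal{H}$, define $C_{\mathcal{M}}:=\{\mathbf{x}\in J_q(n,w):\binom{e'_{\mathbf{x}}}{2t}\in\mathcal{M}\}$; the bijection immediately gives $|C_{\mathcal{M}}|=|\mathcal{M}|$.

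Next I would check that $C_{\mathcal{M}}$ has minimum distance at least $d$. Take distinct $\mathbf{x},\mathbf{y}\in C_{\mathcal{M}}$. Since $\mathcal{M}$ is a matching and $\binom{e'_{\mathbf{x}}}{2t}\neq\binom{e'_{\mathbf{y}}}{2t}$, these two edges of $\mathcal{H}$ are vertex-disjoint, i.e.\ $e'_{\mathbf{x}}$ and $e'_{\mathbf{y}}$ have no common $2t$-subset, which is equivalent to $|e'_{\mathbf{x}}\cap e'_{\mathbf{y}}|\le 2t-1$. Because $\pi(\cdot)$ and $\supp(\cdot)$ live in disjoint vertex classes, $|e'_{\mathbf{x}}\cap e'_{\mathbf{y}}|=|\pi(\mathbf{x})\cap\pi(\mathbf{y})|+|\supp(\mathbf{x})\cap\supp(\mathbf{y})|$, and by \eqref{eq:distance} this equals $2w-d(\mathbf{x},\mathbf{y})$. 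Combining, $d(\mathbf{x},\mathbf{y})\ge 2w-2t+1=d$, using $2t=2w-d+1$. Thus $C_{\mathcal{M}}$ is an $(n,d,w)_q$-CWC of size $|\mathcal{M}|$, as claimed.

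There is no genuine obstacle here; the lemma is a translation of the definitions of $\mathcal{G}$, $\mathcal{H}$, $\pi$, and $\supp$. The only steps that deserve a line of care are (a) that two edges $\binom{e'_1}{2t},\binom{e'_2}{2t}$ of $\mathcal{H}$ are disjoint precisely when $|e'_1\cap e'_2|<2t$, and (b) the injectivity of $\mathbf{x}\mapsto\binom{e'_{\mathbf{x}}}{2t}$; both rely on $1\le t\le w$ (so that a $2w$-element set is recoverable from its family of $2t$-subsets), which is guaranteed by $d$ being odd in the relevant range $1\le d\le 2w-1$.
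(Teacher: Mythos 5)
Your proof is correct and takes essentially the same route as the paper: map each codeword $\mathbf{x}$ to the edge $\binom{\pi(\mathbf{x})\cup\supp(\mathbf{x})}{2t}$, translate vertex-disjointness of two such edges into $|\pi(\mathbf{x})\cap\pi(\mathbf{y})|+|\supp(\mathbf{x})\cap\supp(\mathbf{y})|\le 2t-1$, and conclude via \eqref{eq:distance} that $d(\mathbf{x},\mathbf{y})\ge 2w-(2t-1)=d$. The only addition you make is to spell out the injectivity of $\mathbf{x}\mapsto\binom{e'_{\mathbf{x}}}{2t}$ (needed for $|C_{\mathcal{M}}|=|\mathcal{M}|$), which the paper leaves implicit; this is a harmless and mildly clarifying refinement rather than a different argument.
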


\begin{proof}
    Consider a matching $$\mathcal{M}=\left\{\binom{e^i\cup\supp(e^i)}{2t}:i\in [m]\right\}\subseteq\mathcal{H}$$
    of size $m$, where for every $i$, $e^i\in\mathcal{J}_q(n,w)$. Let $x^i:=\pi^{-1}(e^i)$, where $\pi$ is the mapping defined in \eqref{eq:mapping}, and let $C_{\mathcal{M}}:=\{x^i:i\in[m]\}\subseteq J_q(n,w)$. To prove the lemma, it remains to show that for all distinct $i,j\in[m]$, $d(x^i,x^j)\ge d$. Indeed, by \eqref{eq:distance} we have that
    \begin{align*}
    d(x^i,x^j)=2w-|\pi(x^i)\cap\pi(x^j)|-|\supp(x^i)\cap\supp(x^j)|=2w-|e^i\cap e^j|-|\supp(e^i)\cap\supp(e^j)|.
    \end{align*}
    Since $\mathcal{M}$ is a matching in $\mathcal{H}$, we have that $$\binom{e^i\cup\supp(e^i)}{2t}\bigcap\binom{e^j\cup\supp(e^j)}{2t}=\emptyset,$$ which implies that $|e^i\cap e^j|+|\supp(e^i)\cap\supp(e^j)|\le 2t-1$. It follows that
    \begin{align*}
        d(x^i,x^j)\ge2w-(2t-1)=d,
    \end{align*}
    completing the proof of the lemma.
\end{proof}

The lemma below shows that $\mathcal{H}$ indeed has a large matching number.

\begin{lemma}\label{lem:large-matching-via-Kahn}
    $\nu(\mathcal{H})\ge(1-o(1))\frac{(q-1)^t\binom{n}{t}}{\binom{w}{t}}$, where $o(1)\rightarrow 0$ as $n\rightarrow\infty$.
\end{lemma}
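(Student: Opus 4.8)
The plan is to apply \cref{cor:Kahn} to the hypergraph $\mathcal{H}$, so the task reduces to estimating $\Delta(\mathcal{H})$, $\cod(\mathcal{H})$, and $|E(\mathcal{H})|$, and checking that $\cod(\mathcal{H})/\Delta(\mathcal{H})=o(1)$. We already know $\mathcal{H}$ is $\binom{2w}{2t}$-bounded (in fact uniform) and $|E(\mathcal{H})|=\binom{n}{w}(q-1)^w$, so the conclusion $\nu(\mathcal{H})>(1-o(1))|E(\mathcal{H})|/\Delta(\mathcal{H})$ will immediately give the claimed bound once we show $\Delta(\mathcal{H})=(1+o(1))\binom{w}{t}\binom{n-t}{w-t}(q-1)^{w-t}$; indeed $\binom{n}{w}(q-1)^w/\big(\binom{w}{t}\binom{n-t}{w-t}(q-1)^{w-t}\big)=(q-1)^t\binom{n}{t}/\binom{w}{t}$ by the standard identity $\binom{n}{w}\binom{w}{t}=\binom{n}{t}\binom{n-t}{w-t}$.

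First I would compute $\Delta(\mathcal{H})$. A vertex of $\mathcal{H}$ is a $2t$-subset $S$ of $V(\mathcal{G})=(\cup_i V_i)\cup[n]$, and the edges of $\mathcal{H}$ containing $S$ correspond to edges $e\in\mathcal{J}_q(n,w)$ with $S\subseteq e\cup\supp(e)$. The maximum is attained when $S$ is chosen so that this containment is as easy as possible to satisfy — concretely, when $S$ consists of $t$ vertices drawn from the $V_i$'s together with the $t$ corresponding support-indices in $[n]$ (so that $S$ imposes only $t$ "coordinate-with-value" constraints on $e$ rather than $2t$ independent ones). For such an $S$, the number of extensions to a full edge $e\in\mathcal{J}_q(n,w)$ is $\binom{n-t}{w-t}(q-1)^{w-t}$: pick the remaining $w-t$ support coordinates among the other $n-t$ indices and assign each a nonzero value. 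One then has to argue this choice of $S$ is optimal (any $S$ that "splits" a value away from its index, or uses more than $t$ support indices, yields fewer extensions — a short case analysis on how $S\cap(\cup_i V_i)$ projects to $[n]$ versus $S\cap[n]$), and that the leading term is $\binom{w}{t}\binom{n-t}{w-t}(q-1)^{w-t}$; actually the cleanest route is just to prove $\Delta(\mathcal{H})\le\binom{w}{t}\binom{n-t}{w-t}(q-1)^{w-t}$ as an exact upper bound and $\Delta(\mathcal{H})\ge\binom{n-t}{w-t}(q-1)^{w-t}$ as a lower bound, then note both sides agree to first order in $n$ up to the constant $\binom{w}{t}$ — one must be a little careful here and get the constant right, since it appears in the final answer. (I expect the honest statement is $\Delta(\mathcal{H})=\binom{w}{t}\binom{n-t}{w-t}(q-1)^{w-t}$ exactly, coming from choosing $S$ to lie inside a single edge $e\cup\supp(e)$ and counting, but the verification that no $S$ does better needs the case analysis.)

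Next I would bound $\cod(\mathcal{H})$. For two distinct $2t$-subsets $S,S'$, an edge of $\mathcal{H}$ containing both must contain $S\cup S'$, which has size at least $2t+1$; so $\cod(\mathcal{H})$ is at most the maximum number of $e\in\mathcal{J}_q(n,w)$ whose vertex-plus-support set contains a fixed $(2t+1)$-set, and the same counting as above shows this is $O(\binom{n-t-1}{w-t-1}(q-1)^{w-t-1})=O(n^{w-t-1})$, whereas $\Delta(\mathcal{H})=\Theta(n^{w-t})$. Hence $\cod(\mathcal{H})/\Delta(\mathcal{H})=O(1/n)=o(1)$ as $|V(\mathcal{H})|\to\infty$, and \cref{cor:Kahn} applies. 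I'd also observe that $|V(\mathcal{H})|=\binom{nq}{2t}\to\infty$ with $n$, so the $o(1)$ in \cref{cor:Kahn} really is a function of $n$. The main obstacle is the bookkeeping in the degree computation — specifically pinning down the exact constant $\binom{w}{t}$ in $\Delta(\mathcal{H})$ and making the "optimal $S$ lies in one edge" claim rigorous; everything else is a routine application of \cref{cor:Kahn} combined with the binomial identity above.
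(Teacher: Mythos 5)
Your overall approach matches the paper's: apply \cref{cor:Kahn} to $\mathcal{H}$ by estimating $\Delta(\mathcal{H})$ and $\cod(\mathcal{H})$. However, there is a genuine error in your degree calculation that is then hidden by a compensating arithmetic slip.

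You claim that the ``honest statement'' is $\Delta(\mathcal{H})=\binom{w}{t}\binom{n-t}{w-t}(q-1)^{w-t}$, ``coming from choosing $S$ to lie inside a single edge $e\cup\supp(e)$ and counting.'' This is wrong, and the quantity you describe does not compute a degree. The degree of a fixed vertex $S$ is the number of hyperedges of $\mathcal{H}$ containing $S$, equivalently the number of $e\in\mathcal{J}_q(n,w)$ with $S\subseteq e\cup\supp(e)$; the factor $\binom{w}{t}$ would instead count the number of ``good-form'' vertices contained in a single fixed hyperedge, which is a different quantity. Your own extension count already gave the correct answer, $\binom{n-t}{w-t}(q-1)^{w-t}$, for the optimal choice of $S$ (namely $S=f\cup\supp(f)$ with $|f|=t$), and the paper's case analysis shows this is exactly $\Delta(\mathcal{H})$ for $n$ large: any $S$ with $\supp(f)\neq S\cap[n]$ forces $|\supp(f)\cup(S\cap[n])|\ge t+1$ and hence $\deg(S)\le\binom{n-t-1}{w-t-1}(q-1)^w=o(n^{w-t})$. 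So the ``lower bound'' you wrote is in fact the true value, with no extra $\binom{w}{t}$.

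The error is then masked by a mistake in your arithmetic check. You write
\begin{align*}
\frac{\binom{n}{w}(q-1)^w}{\binom{w}{t}\binom{n-t}{w-t}(q-1)^{w-t}}=\frac{(q-1)^t\binom{n}{t}}{\binom{w}{t}},
\end{align*}
but using $\binom{n}{w}\binom{w}{t}=\binom{n}{t}\binom{n-t}{w-t}$ the left side actually equals $(q-1)^t\binom{n}{t}/\binom{w}{t}^2$, off by a factor $\binom{w}{t}$. If you had carried out either the degree computation or the arithmetic correctly, you would have caught the other error. The correct chain is $\Delta(\mathcal{H})=\binom{n-t}{w-t}(q-1)^{w-t}$ and then $|E(\mathcal{H})|/\Delta(\mathcal{H})=\binom{n}{w}(q-1)^w/\bigl(\binom{n-t}{w-t}(q-1)^{w-t}\bigr)=(q-1)^t\binom{n}{t}/\binom{w}{t}$, which gives the claimed bound. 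Your codegree estimate $\cod(\mathcal{H})=O(n^{w-t-1})=o(\Delta(\mathcal{H}))$ is correct and agrees with the paper's argument, as does the observation that $|V(\mathcal{H})|\to\infty$ with $n$; the only gap is the spurious $\binom{w}{t}$ in $\Delta(\mathcal{H})$ and the compensating arithmetic error.
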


\begin{proof}
We will prove the lemma by applying \cref{cor:Kahn} to $\mathcal{H}$. As $|E(\mathcal{H})|=\binom{n}{w}(q-1)^w$, to prove the lemma, it suffices to show that (i) $\Delta(\mathcal{H})=\binom{n-t}{w-t}(q-1)^{w-t}$ and (ii) $\cod(\mathcal{H})/\Delta(\mathcal{H})=o(1)$, where $o(1)\rightarrow 0$ as $n\rightarrow\infty$.

To prove (i), consider an arbitrary vertex $$u\in V(\mathcal{H})=\binom{(\cup_{i=1}^n V_i)\cup [n]}{2t}.$$
Then
$$\deg(u)=|\{e\in\mathcal{J}_q(n,w):u\subseteq e\cup\supp(e)\}|.$$
As $\mathcal{J}_q(n,w)$ is $n$-partite, i.e., $|e\cap V_i|\le 1$ for every $e\in\mathcal{J}_q(n,w)$ and $i\in [n]$, to prove (i) we can assume without loss of generality that for every $i\in[n]$, $|u\cap V_i|\le 1$, since otherwise $\deg(u)=0$. Let $f=u\cap(\cup_{i=1}^n V_i)$ and recall the definition of $\supp(f)$ in \eqref{eq:support}. Clearly, $|\supp(f)|=|f|$ and
\begin{align}\label{eq:sum=2t}
    |u|=|\supp(f)|+|u\cap [n]|=2t.
\end{align}
We will consider the following two cases.
\begin{itemize}
    \item [(a)] If $\supp(f)\neq u\cap [n]$, then
    \begin{align*}
        |\supp(f)\cap(u\cap [n])|<\min\{|\supp(f)|,|u\cap [n]|\}=\min\{|\supp(f)|,2t-|\supp(f)|\}\le t.
    \end{align*}
    It follows that
    \begin{align*}
        |\supp(f)\cup(u\cap [n])|=|\supp(f)|+|u\cap [n]|-|\supp(f)\cap(u\cap [n])|\ge t+1.
    \end{align*}
    Therefore,
    \begin{align*}
\deg(u)
&\le|\{e\in\mathcal{J}_q(n,w):\supp(f)\cup(u\cap [n])\subseteq\supp(e)\}|\\
&\le\binom{n-t-1}{w-t-1}(q-1)^w,
\end{align*}
where the first inequality holds since for every $e\in\mathcal{J}_q(n,w)$ with $u\subseteq e\cup\supp(e)$ we must have $\supp(f)\subseteq\supp(e)$ and $u\cap[n]\subseteq\supp(e)$, and the second inequality holds since given that  $\supp(f)\cup(u\cap [n])\subseteq\supp(e)$ and $|\supp(f)\cup(u\cap [n])|\ge t+1$ there are at most $\binom{n-t-1}{w-t-1}$ choice for $\supp(e)$, and for every fixed $\supp(e)$, there are at most $(q-1)^w$ choices for $e$.

\item [(b)] If $\supp(f)=u\cap [n]$, then by \eqref{eq:sum=2t} we further have that $|\supp(f)|=|u\cap [n]|=t$, and hence
\begin{align*}
   \deg(u)&=|\{e\in\mathcal{J}_q(n,w):u\subseteq e\cup\supp(e)\}|\\
   &=\binom{n-t}{w-t}(q-1)^{w-t},
\end{align*}
where the second equality holds since under assumption (b), there are $\binom{n-t}{w-t}$ choices for $\supp(e)$, and for every fixed $\supp(e)$, there are $(q-1)^{w-t}$ choices for $e\setminus f$.
\end{itemize}
As $w,t,q$ are all fixed, for sufficiently large $n$ we have $\Delta(\mathcal{H})=\binom{n-t}{w-t}(q-1)^{w-t}$, as needed.

It remains to prove (ii). Consider arbitrary two distinct vertices $u^1,u^2\in V(\mathcal{H})$. Then
\begin{align*}
    \deg(u^1,u^2)=|\{e\in\mathcal{J}_q(n,w):u^1\subseteq e\cup\supp(e)~\text{and}~u^2\subseteq e\cup\supp(e)\}|.
\end{align*}
Let $f^1=u^1\cap(\cup_{i=1}^n V_i)$ and $f^2=u^2\cap(\cup_{i=1}^n V_i)$. As $\deg(u^1,u^2)\le\min\{\deg(u^1),\deg(u^2)\}$, to prove (ii) we only need to consider the case in which
\begin{align}\label{eq:assumption}
    \text{$\supp(f^1)=u^1\cap [n]$  and $\supp(f^2)=u^2\cap [n]$,}
\end{align}
 since otherwise similarly to the proof in (a) one can show that $$\deg(u^1,u^2)\le\min\{\deg(u^1),\deg(u^2)\}\le\binom{n-t-1}{w-t-1}(q-1)^w=o(\Delta(\mathcal{H})).$$
 Given \eqref{eq:assumption}, it follows by \eqref{eq:sum=2t} that $|\supp(f^1)|=|u^1\cap [n]|=t$ and $|\supp(f^2)|=|u^2\cap [n]|=t$. The remaining proof is divided into two cases.
\begin{itemize}
    \item [(c)] If $\supp(f^1)\neq\supp(f^2)$, then $|\supp(f^1)\cup\supp(f^2)|\ge t+1$. Again similarly to the proof in (a), one can show that
    \begin{align*}
        \deg(u^1,u^2)&\le|\{e\in\mathcal{J}_q(n,w):\supp(f^1)\cup\supp(f^2)\subseteq\supp(e)\}|\\
        &\le\binom{n-t-1}{w-t-1}(q-1)^w=o(\Delta(\mathcal{H})).
    \end{align*}

    \item [(d)] Suppose that $\supp(f^1)=\supp(f^2)$. If $f^1\neq f^2$, then by the $n$-partiteness of $\mathcal{J}_q(n,w)$ we have
    \begin{align*}
        \deg(u^1,u^2)\le|\{e\in\mathcal{J}_q(n,w):f^1\cup f^2\subseteq e\}|=0.
    \end{align*}
    If $f^1=f^2$, then we have $u^1\cap(\cup_{i=1}^n V_i)=u^2\cap(\cup_{i=1}^n V_i)$, which together with \eqref{eq:assumption} imply that $u^1=u^2$, a contradiction.
\end{itemize}
The proof of (ii) is also completed.
\end{proof}

\begin{proof}[Proof of \cref{thm:cwc}]
    \cref{thm:cwc} is a direct consequence of Lemmas \ref{lem:matching-cwc} and \ref{lem:large-matching-via-Kahn}.
\end{proof}

\section{Proof of \cref{thm:ccc} via $\mathcal{F}$-free matchinges}\label{sec:ccc}

\paragraph{Proof overview.} We will prove \cref{thm:ccc} via \cref{thm:conflict-free}. The high-level idea is similar to that in the proof of \cref{thm:cwc}. We will appropriately define an auxiliary hypergraph $\mathcal{H}$ and a corresponding configuration hypergraph $\mathcal{F}$ for $\mathcal{H}$, and then convert the problem of constructing a large CCC to the problem of finding a large $\mathcal{F}$-free matching in $\mathcal{H}$ (see \cref{lem:matching-ccc} for details). The latter problem can be solved by applying \cref{thm:conflict-free} (see \cref{lem:large-matching-via-Conflict-free} for details).

\vspace{10pt}

Recall the hypergraph $\mathcal{J}_q(n,\overline{w})$ defined in \cref{def:hypergraph-ccc}. To prove \cref{thm:ccc} let us construct an auxiliary hypergraph $\mathcal{H}$ with $$V(\mathcal{H})=\binom{V(\mathcal{J}_q(n,\overline{w}))}{t}=\binom{\cup_{i=1}^n V_i}{t}$$ and $$E(\mathcal{H})=\left\{ \binom{e}{t}:e\in \mathcal{J}_q(n,\overline{w})\right\}.$$ It is clear that $\mathcal{H}$ is a $\binom{w}{t}$-uniform hypergraph with $|V(\mathcal{H})|=\binom{nq}{t}$ and $|E(\mathcal{H})|=\binom{n}{n-w,w_1,\dots,w_{q-1}}$.

Let $\mathcal{F}$ be the configuration hypergraph for $\mathcal{H}$ with $V(\mathcal{F})=E(\mathcal{H})$ and
\begin{align*}
   E(\mathcal{F})=\left\{\left\{\binom{e^1}{t},\binom{e^2}{t}\right\}:e^1,e^2\in \mathcal{J}_q(n,\overline{w}),~|\supp(e^1)\cap \supp(e^2)|>t\right\}.
\end{align*}

The following lemma shows that every $\mathcal{F}$-free matching of $\mathcal{H}$ defines an $(n,d,\overline{w})_q$-CCC.

\begin{lemma}\label{lem:matching-ccc}
    Every $\mathcal{F}$-free matching $\mathcal{M}$ of $\mathcal{H}$ defines an $(n,d,\overline{w})_q$-CCC $C_{\mathcal{M}}\subseteq J_q(n,\overline{w})$ with $|C_{\mathcal{M}}|=|\mathcal{M}|$.
\end{lemma}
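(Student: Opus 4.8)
The plan is to mimic the proof of \cref{lem:matching-cwc}, translating the combinatorial data of an $\mathcal{F}$-free matching back into a code via the bijection $\pi$, and then checking the minimum distance condition using the distance formula \eqref{eq:distance}. So first I would take an arbitrary $\mathcal{F}$-free matching $\mathcal{M}=\left\{\binom{e^i}{t}:i\in[m]\right\}\subseteq\mathcal{H}$, where each $e^i\in\mathcal{J}_q(n,\overline{w})$, set $x^i:=\pi^{-1}(e^i)\in J_q(n,\overline{w})$, and define $C_{\mathcal{M}}:=\{x^i:i\in[m]\}$. Since $\pi$ is a bijection and the $\binom{e^i}{t}$ are pairwise distinct (being disjoint as edges of a matching, and nonempty since $t\le w$), the $e^i$ are pairwise distinct, hence so are the $x^i$, giving $|C_{\mathcal{M}}|=m=|\mathcal{M}|$. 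Each $x^i$ has composition $\overline{w}$ by the definition of $J_q(n,\overline{w})$, so $C_{\mathcal{M}}$ is automatically a constant composition code; it only remains to verify the distance.

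Next I would fix distinct $i,j\in[m]$ and estimate $d(x^i,x^j)$ using \eqref{eq:distance}:
\begin{align*}
    d(x^i,x^j)=2w-|\pi(x^i)\cap\pi(x^j)|-|\supp(x^i)\cap\supp(x^j)|=2w-|e^i\cap e^j|-|\supp(e^i)\cap\supp(e^j)|.
\end{align*}
Two constraints feed into this. The matching condition gives $\binom{e^i}{t}\cap\binom{e^j}{t}=\emptyset$, which forces $|e^i\cap e^j|\le t-1$ (if $|e^i\cap e^j|\ge t$ then $e^i$ and $e^j$ would share a common $t$-subset). The $\mathcal{F}$-free condition gives $\{\binom{e^i}{t},\binom{e^j}{t}\}\notin E(\mathcal{F})$, which by the definition of $E(\mathcal{F})$ means $|\supp(e^i)\cap\supp(e^j)|\le t$. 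Combining, $d(x^i,x^j)\ge 2w-(t-1)-t=2w-2t+1$. Since $t=\frac{2w-d+1}{2}$, i.e. $2t=2w-d+1$, this equals $2w-(2w-d+1)+1=d$, as required. This shows $C_{\mathcal{M}}$ is an $(n,d,\overline{w})_q$-CCC.

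The step requiring a little care — and the one I would expect to be the most delicate bookkeeping — is confirming that the two bounds $|e^i\cap e^j|\le t-1$ and $|\supp(e^i)\cap\supp(e^j)|\le t$ are exactly the right strengths and that $E(\mathcal{F})$ was defined with the correct threshold ($>t$ rather than $\ge t$) so that the arithmetic lands precisely on $d$ for odd $d$. Conceptually there is no obstacle here: the design of $\mathcal{H}$ (via $\binom{\cdot}{t}$, which handles the $|e^i\cap e^j|$ term through the matching property) and of $\mathcal{F}$ (which handles the support-overlap term) is tailored so that an $\mathcal{F}$-free matching is literally a restatement of the CCC distance condition. The one genuinely new ingredient compared to \cref{lem:matching-cwc} is that the support-overlap constraint, which in the CWC proof was packaged into $\mathcal{H}$ itself by adjoining $\supp(e)$ to each edge, is here instead enforced by the forbidden configurations in $\mathcal{F}$; this is necessary because a naive analogue of the CWC hypergraph would no longer have the regularity needed for \cref{thm:conflict-free}, but that is a concern for \cref{lem:large-matching-via-Conflict-free}, not for the present lemma.
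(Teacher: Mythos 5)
Your proof is correct and follows essentially the same approach as the paper: translate the matching back to vectors via $\pi^{-1}$, apply the distance identity \eqref{eq:distance}, and extract $|e^i\cap e^j|\le t-1$ from the matching condition and $|\supp(e^i)\cap\supp(e^j)|\le t$ from $\mathcal{F}$-freeness to conclude $d(x^i,x^j)\ge 2w-(t-1)-t=d$. The only addition over the paper's proof is your explicit check that the $x^i$ are distinct and have the right composition, which the paper leaves implicit.
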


\begin{proof}
    Let $\mathcal{M}=\left\{\binom{e^i}{t}:i\in [m]\right\}$ be a matching of $\mathcal{H}$ with size $m$, where for every $i$, $e^i\in \mathcal{J}_q(n,\overline{w})$. Let $x^i:=\pi^{-1}(e^i)$, where $\pi$ is the mapping defined in \eqref{eq:mapping}, and let $C_{\mathcal{M}}:=\{x^i:i\in[m]\}\subseteq J_q(n,\overline{w})$. To prove the lemma, it remains to show that for all distinct $i,j\in [m]$, $d(x^i,x^j)\ge d$. By \eqref{eq:distance}, we have that
    $$d(x^i,x^j)=2w-|\pi(x^i)\cap\pi(x^j)|-|\supp(x^i)\cap\supp(x^j)|=2w-|e^i\cap e^j|-|\supp(e^i)\cap\supp(e^j)|. $$
    Since $\mathcal{M}$ is $\mathcal{F}$-free, we have that $$|\supp(e^i)\cap \supp(e^j)|\le t.$$ Moreover, since $\mathcal{M}$ is a matching of $\mathcal{H}$, it follows that $$\binom{e^i}{t}\bigcap \binom{e^j}{t}=\emptyset,$$ which implies that $|e^i\cap e^j|\le t-1$. Hence $$d(x^i,x^j)\ge 2w-(t-1)-t=d,$$ completing the proof of the lemma.
\end{proof}

The lemma below shows that $\mathcal{H}$ indeed has a large $\mathcal{F}$-free matching.

\begin{lemma}\label{lem:large-matching-via-Conflict-free}
     $\mathcal{H}$ has a $\mathcal{F}$-free matching with size at least $(1-n^{t-w})\frac{\binom{n}{n-w,w_1,\dots,w_{q-1}}}{\binom{n-t}{w-t}f(\overline{w},w-t)}$ when $n$ is sufficiently large.
\end{lemma}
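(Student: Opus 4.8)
The plan is to verify the hypotheses of \cref{thm:conflict-free} for the pair $(\mathcal{H},\mathcal{F})$ defined above, with the role of the degree bound $D$ played by $\Delta(\mathcal{H})$. First I would compute $\Delta(\mathcal{H})$ exactly. A vertex of $\mathcal{H}$ is a set $S\in\binom{\cup_i V_i}{t}$; it lies in the edge $\binom{e}{t}$ iff $S\subseteq e$ for some $e\in\mathcal{J}_q(n,\overline{w})$. Writing $f=S$ (so $|\supp(f)|=t$ since $S$ hits $t$ distinct parts, else $\deg(S)=0$), one counts the number of $e\in\mathcal{J}_q(n,\overline{w})$ containing $f$: we must choose the remaining $w-t$ coordinates of $\supp(e)$ among the $n-t$ untouched parts and then assign symbol-multiplicities consistent with $\overline{w}$ minus the contribution of $f$. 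Maximizing over the composition of $f$ gives $\Delta(\mathcal{H})=\binom{n-t}{w-t}f(\overline{w},w-t)$ for $n$ large, which matches the claimed denominator since $|E(\mathcal{H})|=\binom{n}{n-w,w_1,\dots,w_{q-1}}$; set $D:=\Delta(\mathcal{H})$. Next I would bound $\cod(\mathcal{H})$: two distinct vertices $S_1,S_2$ span together at least $t+1$ parts (or $S_1\cup S_2$ is not $n$-partite, giving codegree $0$), so $\deg(S_1,S_2)\le\binom{n-t-1}{w-t-1}(q-1)^{w}=O(D/n)$, which is $\le D^{1-\beta}$ for any fixed $\beta<1$ and $n$ large; this also immediately handles condition (3), the maximum $2$-codegree of $\mathcal{H}$ with respect to $\mathcal{F}$, by the same $\binom{n-t-1}{w-t-1}$-type estimate.

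The substantive work is conditions (1), (2), (4), which concern $\mathcal{F}$. Since $\mathcal{F}$ is $2$-uniform ($l=2$), condition (2) is vacuous and condition (1) reduces to bounding $\Delta_2(\mathcal{F})$: fixing $e^1\in\mathcal{J}_q(n,\overline{w})$, I must count $e^2$ with $|\supp(e^1)\cap\supp(e^2)|>t$, i.e.\ $|\supp(e^1)\cap\supp(e^2)|\ge t+1$. Choosing which $\ge t+1$ of the $w$ parts of $\supp(e^1)$ are reused, and then freely placing the rest of $e^2$, gives $\Delta_2(\mathcal{F})=O(\binom{n-t-1}{w-t-1}(q-1)^{w})=O(D/n)$, which is comfortably below $\alpha D\log D$. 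Condition (4), the maximum common $2$-degree of $\mathcal{F}$: for distinct $e^1,e^2$, I count $e^3\in\mathcal{J}_q(n,\overline{w})$ with $|\supp(e^3)\cap\supp(e^1)|\ge t+1$ \emph{and} $|\supp(e^3)\cap\supp(e^2)|\ge t+1$; since $e^3$ must reuse $\ge t+1$ parts of the fixed set $\supp(e^1)$, this is again $O(\binom{n-t-1}{w-t-1}(q-1)^{w})=O(D/n)\le D^{1-\beta}$. So all four conditions hold with, say, $\beta=1/2$ and $n$ large.

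\cref{thm:conflict-free} then yields an $\mathcal{F}$-free matching of size at least $\frac{|E(\mathcal{H})|}{D}(1-D^{-\alpha})$. Since $|E(\mathcal{H})|/D=\frac{\binom{n}{n-w,w_1,\dots,w_{q-1}}}{\binom{n-t}{w-t}f(\overline{w},w-t)}$ and $D^{-\alpha}=o(1)$ as $n\to\infty$, this is at least $(1-n^{t-w})$ times the target for $n$ large (absorbing the polynomial error into $n^{t-w}$, which is legitimate because $w>t$ and hence $n^{t-w}$ dominates $D^{-\alpha}$ if $\alpha$ is small, or one simply replaces $n^{t-w}$ by $D^{-\alpha}=o(1)$ and notes both are $o(1)$). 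I expect the main obstacle to be the bookkeeping in the exact evaluation of $\Delta(\mathcal{H})$—in particular verifying that the maximizing choice of the composition of the deleted $t$-set produces exactly the factor $f(\overline{w},w-t)$ rather than $f(\overline{w},t)$, i.e.\ getting the duality $\overline{t}\leftrightarrow\overline{w}-\overline{t}$ straight—together with double-checking that every ``small'' quantity is genuinely $O(D/n)$ uniformly in the fixed parameters $q,w,\overline{w},t$, so that a single $\beta$ works for all four conditions simultaneously.
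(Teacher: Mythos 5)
Your proposal follows essentially the same route as the paper: take $D:=\Delta(\mathcal{H})=\binom{n-t}{w-t}f(\overline{w},w-t)$, verify $\cod(\mathcal{H})\le D^{1-\beta}$, note that $\mathcal{F}$ is $2$-uniform so condition (2) of \cref{thm:conflict-free} is vacuous, bound $\Delta_2(\mathcal{F})$ by the same $\binom{n-t-1}{w-t-1}\cdot O(1)$ counting argument, observe that conditions (3) and (4) reduce to essentially the same estimate, and invoke \cref{thm:conflict-free}. Your $\Delta(\mathcal{H})$ computation (including the duality $\overline{u}\leftrightarrow\overline{w}-\overline{u}$ between $(\overline{w},t)$- and $(\overline{w},w-t)$-admissible compositions) matches the paper's Claim~\ref{claim-1}, and your $O(D/n)$ bounds are the same as the paper's $\le D^{1-\beta}$ with $\beta=\tfrac{1}{2(w-t)}$; using $(q-1)^w$ in place of $\binom{w}{w_1,\dots,w_{q-1}}$ is an immaterial overcount.

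One small correction on the error-term discussion at the end: since $D\asymp n^{w-t}$, we have $D^{-\alpha}\asymp n^{-\alpha(w-t)}$, so the inequality $D^{-\alpha}\le n^{t-w}$ holds precisely when $\alpha\ge 1$, \emph{not} when $\alpha$ is small --- you have the direction reversed. This matters because $\alpha$ in \cref{thm:conflict-free} is an output of the theorem, not a free parameter, so one cannot simply assume $\alpha\ge 1$. The honest conclusion from the black-box theorem is a matching of size $(1-D^{-\alpha})|E(\mathcal{H})|/D$, and since $D^{-\alpha}=o(1)$ this suffices for \cref{thm:ccc}; the specific error term $n^{t-w}$ written in the lemma statement should really be read as a stand-in for $D^{-\alpha}$ (the paper's own proof also glosses over this by asserting $\alpha=1$). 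Your fallback remark --- "one simply replaces $n^{t-w}$ by $D^{-\alpha}=o(1)$" --- is the right fix, so this does not affect the substance of the argument.
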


\begin{proof}
    We will prove the lemma by applying \cref{thm:conflict-free}. Below we will verify that $\mathcal{H}$ and $\mathcal{F}$ satisfy the assumptions of \cref{thm:conflict-free}. Let $\beta:=\frac{1}{2(w-t)}$ and $D:=\binom{n-t}{w-t}f(\overline{w},w-t)$ and $\alpha =1$.

\begin{claim}\label{claim-1}
    $\mathcal{H}$ satisfies (i) $\Delta(\mathcal{H})\le D$ and (ii) $\cod(\mathcal{H})=O(D^{1-\beta})$.
\end{claim}
    To prove (i), we associate every vertex $u\in V(\mathcal{H})=\binom{\cup_{i=1}^n V_i}{t}$ with a vector $\overline{u}=(u_1,\dots,u_{q-1})$, where for every $j\in [q-1]$, $u_j:=|\{i\in [n]:(i,j)\in u\}|$. Clearly, $\sum_{j=1}^n u_j=t$. It is routine to check by the definition of $\mathcal{H}$ and $\mathcal{J}_q(n,\overline{w})$ that $\deg(u)>0$ is only possible when $\overline{u}$ is $(\overline{w},t)$-admissible and $|u\cap V_i|\le 1$ for every $i\in [n]$. Moreover, for every such $u$ we have that
    \begin{align*}
        \deg(u)&=|\{e\in\mathcal{J}_q(n,\overline{w}):u\subseteq e\}|\\
        &=\binom{n-t}{w-t}\binom{w-t}{w_1-u_1,\dots,w_{q-1}-u_{q-1}}\\
        &\le\binom{n-t}{w-t}f(\overline{w},w-t)=D,
    \end{align*}
    where the second equality holds since given $u\subseteq e$ there are $\binom{n-t}{w-t}$ choices for $\supp(e)$ and for every fixed $\supp(e)$, there are $\binom{w-t}{w_1-u_1,\dots,w_{q-1}-u_{q-1}}$ choices for $e\setminus u$; the inequality follows by the definition of $f(\overline{w},w-t)$.

    To prove (ii), note that for every distinct $u^1,u^2\in V(\mathcal{H})$, $$\deg(u^1,u^2)=|\{e\in \mathcal{J}_q(n,\overline{w}):u^1,u^2\subseteq e\}|\le \min\{\deg_{\mathcal{H}}(u^1),\deg_{\mathcal{H}}(u^2)\}. $$
    If $\supp(u^1)=\supp(u^2)$, then since $u^1\neq u^2$, there exists $i\in \supp(u^1)=\supp(u^2)$ such that $|(u^1\cup u^2)\cap V_i|\ge 2$. Then it is clear that in this case $\deg(u^1,u^2)=0$. If $\supp(u^1)\neq \supp(u^2)$, then $|\supp(u^1)\cup \supp(u^2)|\ge t+1$, and hence
    \begin{align*}
        \deg(u^1,u^2)&\le |\{e\in\mathcal{J}_q(n,\overline{w}):\supp(u^1)\cup \supp(u^2)\subseteq \supp(e)\}|\\
        &\le \binom{n-t-1}{w-t-1}\binom{w}{w_1,\dots,w_{q-1}},
    \end{align*}
    where the second inequality holds since given that $\supp(u^1)\cup \supp(u^2)\subseteq \supp(e)$, we have at most $\binom{n-t-1}{w-t-1}$ choices for $\supp(e)$ and for every fixed $\supp(e)$, there are $\binom{w}{w_1,\dots,w_{q-1}}$ choices for $e$. Hence, for fixed $\overline{w},t,q$ and sufficiently large $n$, we have $$\cod (\mathcal{H})\le \binom{n-t-1}{w-t-1}\binom{w}{w_1,\dots,w_{q-1}}\le D^{1-\beta},$$
    as needed.

    \begin{claim}\label{claim-2}
        $\mathcal{F}$ satisfies all of the assumptions (1)-(4) listed in \cref{thm:conflict-free}.
    \end{claim}

    Note that $\mathcal{F}$ is a $2$-uniform hypergraph (i.e., a graph) with $V(\mathcal{F})=E(\mathcal{H})$. So (2) holds vacuously. To verify (1), by the definition of $\mathcal{F}$, we have
    \begin{align*}
        \Delta_2(\mathcal{F})& = \max_{\binom{e^1}{t}\in V(\mathcal{F})}\bigg|\left\{\left\{\binom{e^1}{t},{e^2\choose t}\right\}\in E(\mathcal{F}):e^2\in \mathcal{J}_q(n,\overline{w})\right\}\bigg|\\
        &= \max_{e^1\in \mathcal{J}_q(n,\overline{w})}\mid \{e^2\in \mathcal{J}_q(n,\overline{w}):|\supp(e^1)\cap \supp(e^2)|>t\}\mid\\
        &\le \sum_{i=t+1}^{w}\binom{w}{i}\binom{n-w}{w-i}\binom{w}{w_1,\dots,w_{q-1}}\\
        &\le D^{1-\beta},
    \end{align*}
    where the first inequality holds since for every fixed $t+1\le i\le w$, there are $\binom{w}{i}$ choices for $\supp(e_1)\cap \supp(e_2)$, for fixed $\supp(e_1)\cap \supp(e_2)$, there are $\binom{n-w}{w-i}$ choices for $\supp(e_2)$ and for fixed $\supp(e^2)$, there are at most $\binom{w}{w_1,\dots,w_{q-1}}$ choices for $e^2$.
    Moreover, (3) and (4) follow from the fact that the maximum $2$-codegree of $\mathcal{H}$ with respect to $\mathcal{F}$ and the maximum common $2$-codegree of $\mathcal{F}$ are both less than $\Delta_2(\mathcal{F})$.

    Given the two claims above, it follows by \cref{thm:conflict-free} that there exists an $\mathcal{F}$-free matching $\mathcal{M}$ in $\mathcal{H}$ of size at least $$(1-D^{-\alpha})\frac{|E(\mathcal{H})|}{D}=(1-n^{t-w})\frac{\binom{n}{n-w,w_1,\dots, w_{q-1}}}{\binom{n-t}{w-t}f(\overline{w},w-t)},$$ completing the proof of the lemma.
\end{proof}

\begin{proof}[Proof of \cref{thm:ccc}]
    \cref{thm:ccc} is a direct consequence of Lemmas \ref{lem:matching-ccc} and \ref{lem:large-matching-via-Conflict-free}.
\end{proof}

\section{Concluding remarks}\label{sec:con}

\noindent Let us conclude this paper with several interesting problems.

\begin{question}
 Theorems \ref{thm:cwc} and \ref{thm:ccc} showed that the Johnson-type bounds for $A_q(n,d,w)$ and $A_q(n,d,\overline{w})$ are asymptotically tight for all fixed odd distances $d$. Do similar results hold for even $d$?
\end{question}

\begin{question}
 The lower bounds given by Theorems \ref{thm:cwc} and \ref{thm:ccc} are non-constructive. Can we provide explicit constructions with similar lower bounds?
\end{question}

\begin{question}
    Keevash \cite{Kee14} showed that one can precisely attain the Johnson-type bound for $A_2(n,d,w)$ when $n$ is sufficiently large and satisfies certain necessary divisibility conditions.
    For all $q\ge 3$ and odd $d$, can we also attain the Johnson-type bound for $A_q(n,d,w)$ with equality for all large $n$ satisfying the necessary divisibility conditions? More on this question can be found in \cite{Etz22}.
\end{question}

\section*{Acknowledgements}

\noindent This project is supported by the National Key Research and Development Program of China under Grant No. 2021YFA1001000, the National Natural Science Foundation of China under Grant Nos. 12101364 and 12231014, and the Natural Science Foundation of Shandong Province under Grant No. ZR2021QA005.

\bibliographystyle{plain}
\normalem
\bibliography{main}

\end{document}